%
%
%
%
\def\draftdate{\today}

\documentclass{amsart}
\usepackage{amssymb}
\usepackage{xy}
\usepackage{bm}


\newcommand{\overto}[1]{\xrightarrow{\,#1\,}}

\let\Lim\lim
\def\lim{\Lim\nolimits}

\mathchardef\varDelta="7101

\newcommand{\phat}[1][{p}]{^{\wedge}_{#1}}

\let\iso\cong
\let\sma\wedge
\newcommand{\htp}{\simeq}
\renewcommand{\to}{\mathchoice{\longrightarrow}{\rightarrow}{\rightarrow}{\rightarrow}}

\DeclareMathAlphabet{\catsymbfont}{U}{rsfs}{m}{n}

\newcommand{\aF}{{\catsymbfont{F}}}

\newcommand{\oH}{\mathcal{H}}

\newcommand{\bN}{{\mathbb{N}}}
\newcommand{\bS}{{\mathbb{S}}}

\newcommand{\bR}{{\mathbb{R}}}
\newcommand{\bT}{{\mathbb{T}}}
\newcommand{\bZ}{{\mathbb{Z}}}

\newcommand{\dL}{\mathbf{L}}

\def\quickop#1{\expandafter\DeclareMathOperator\csname
#1\endcsname{#1}}
\quickop{id}\quickop{Id}\quickop{holim}\quickop{hocolim}\quickop{op}
\quickop{co}\quickop{Ar}\quickop{sing}\quickop{Hom}\quickop{w}\quickop{Ho}
\quickop{ob}\quickop{diag}\quickop{Cyc}\quickop{Fib}\quickop{Cof}
\quickop{tr}\quickop{trc}\quickop{Gal}\quickop{colim}\quickop{triv}
\quickop{red}\quickop{hoeq}\quickop{Sp}\quickop{cyc}\quickop{can}

\numberwithin{equation}{section}

\newtheorem{thm}[equation]{Theorem}
\newtheorem*{thm*}{Theorem}
\newtheorem{cor}[equation]{Corollary}

\newtheorem{prop}[equation]{Proposition}

\theoremstyle{definition}
\newtheorem{defn}[equation]{Definition}

\newtheorem{conv}[equation]{Convention}
\theoremstyle{remark}

\xyoption{arrow}
\xyoption{curve}
\xyoption{matrix}
\xyoption{cmtip}
\SelectTips{cm}{}

\newcommand{\term}[1]{\textit{#1}}

\newdir{ >}{{}*!/-5pt/\dir{>}}

\bibliographystyle{plain}

\begin{document}

\title[Chromatic convergence for $TC$]%
{A version of Waldhausen's chromatic convergence for $TC$}

\author{Andrew J. Blumberg}
\address{Department of Mathematics, Columbia University, 
New York, NY \ 10027}
\email{blumberg@math.columbia.edu}
\thanks{The first author was supported in part by NSF grant DMS-1812064}
\author{Michael A. Mandell}
\address{Department of Mathematics, Indiana University,
Bloomington, IN \ 47405}
\email{mmandell@indiana.edu}
\thanks{The second author was supported in part by NSF grant DMS-1811820}
\author{Allen Yuan}
\address{Department of Mathematics, Columbia University,
New York, NY \ 10027}
\email{yuan@math.columbia.edu}
\thanks{The third author was supported in part by NSF grant DMS-2002029}

\date{\draftdate}
\subjclass[2020]{Primary 19D10; Secondary 55P42,55P60.}

\begin{abstract}
The map $TC(\mathbb{S})^{\wedge}_{p}\to \holim
TC(L_{n}\mathbb{S})^{\wedge}_{p}$ is a weak equivalence. 
\end{abstract}

\maketitle


\section{Introduction}

Motivated by the chromatic program in stable homotopy theory in
general and chromatic convergence of the sphere spectrum in
particular, Waldhausen~\cite{Waldhausen-Chromatic} proposed studying
the interaction of the chromatic tower with algebraic $K$-theory and
made some specific conjectures about what happens for the $p$-local sphere
spectrum.  Waldhausen conjectured that $K(\bS_{(p)})$ would be the homotopy
limit of the algebraic $K$-theory spectra of $L^{f}_n \bS$, which
would inductively be built from the algebraic $K$-theory of the
monochromatic categories. The monochromatic categories are simpler
than the $p$-local stable category and Waldhausen asked whether their algebraic
$K$-theories would be correspondingly simpler to understand than the
algebraic $K$-theory of the $p$-local sphere spectrum.

The advent of trace methods and the pioneering work of McClure and
Staffeldt in calculating $THH$ of the Adams summand $\ell$
reinvigorated this program.  Ausoni and Rognes~\cite{AusoniRognes}
calculated the periodic homotopy groups of $K(\ell)$ in terms of
$TC(\ell)$.  As part of this work, Rognes made a series of far-reaching conjectures about the
interaction of algebraic $K$-theory and chromatic localization.
Most notably, he formulated a redshift conjecture about 
$K$-theory increasing chromatic complexity, and he formulated a higher chromatic
Quillen-Lichtenbaum conjecture about Galois descent for algebraic $K$-theory
spectra.  Rognes' original formulations involved the localizations
$L_{n}$ (in place of $L_{n}^{f}$), the $p$-complete
sphere, and a $p$-complete version of Waldhausen's chromatic
convergence conjecture, $K(\bS\phat) \htp \holim_n K(L_n \bS\phat)$.

When working with the $p$-complete sphere, there is very little
difference between algebraic $K$-theory and $TC$: work of
Hesselholt-Madsen~\cite[Thm.~D]{HM2} and Dundas~\cite{Dundas-RelK} shows
that the cyclotomic trace $K(\bS\phat)\phat\to TC(\bS\phat)\phat$ is a
weak equivalence on connective covers.  As a consequence,
one is led to consider the $TC$ analogues of Waldhausen's chromatic
convergence conjectures. The purpose of this paper is to prove that
analogue. 

\begin{thm}\label{main}
The natural map
\[
TC(\bS\phat)\phat \to \holim_n TC(L_n (\bS\phat))\phat
\]
is a weak equivalence.
\end{thm}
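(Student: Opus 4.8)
The plan is to reduce the theorem, by way of the Smash Product Theorem and the Nikolaus--Scholze formula for $TC$, to the single assertion that the Tate construction commutes with the chromatic homotopy limit, and then to settle that by a delicate convergence argument built on Hopkins--Ravenel chromatic convergence for finite spectra. The first move is to use that $L_n = L_{E(n)}$ is smashing (Hopkins--Ravenel), so $L_n\bS$ is an idempotent $\mathbb{E}_\infty$-ring and the cyclic bar construction computing $THH(L_n\bS)$ is levelwise an equivalence. Hence $THH(L_n\bS)\htp L_n\bS$ with trivial circle action, and --- just as for $THH(\bS)$ --- the cyclotomic Frobenius is the canonical composite $L_n\bS \to (L_n\bS)^{hC_p}\to(L_n\bS)^{tC_p}$; equivalently, $THH(L_n\bS)$ is the image of $L_n\bS$ under the symmetric monoidal \emph{constant cyclotomic spectrum} functor $X\mapsto X^{\mathrm{triv}}$ normalized by $\bS^{\mathrm{triv}}=THH(\bS)$. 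This step needs care because $L_n(\bS\phat)$ is not connective; but $p$-completely the ring $L_n(\bS\phat)$ is still idempotent over $\bS$, so $THH(L_n(\bS\phat))\phat$ is identified with $(L_n\bS)^{\mathrm{triv}}\phat$, a \emph{bounded-below} cyclotomic spectrum --- recall $L_{E(n)}\bS$ is bounded below, with finitely generated $p$-complete homotopy, by Gross--Hopkins duality and chromatic fracture. (This is exactly where the use of $L_n$ rather than $L_n^f$ costs nothing: the Smash Product Theorem makes $L_n$ behave like a finite localization here, and passage to the $p$-complete sphere is what makes $THH(\bS)^{\mathrm{triv}}$ and Lin's theorem available.) Thus $TC(L_n(\bS\phat))\phat\htp TC\big((L_n\bS)^{\mathrm{triv}}\big)\phat$, and, using Hopkins--Ravenel chromatic convergence $\bS\phat\htp\holim_n(L_n\bS)\phat$ for the connective sphere, the map in question becomes the assembly map $TC(\bS^{\mathrm{triv}})\phat\to\holim_n TC\big((L_n\bS)^{\mathrm{triv}}\big)\phat$.

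Next I would apply the Nikolaus--Scholze fiber sequence $TC(-)\phat\htp\Fib\big(TC^-(-)\phat\overto{\varphi-\can}TP(-)\phat\big)$ for bounded-below cyclotomic spectra. Homotopy limits commute with $\Fib$, with $p$-completion, and with $(-)^{hS^1}=F(\Sigma^{\infty}_{+}\bC P^{\infty},-)$, so the $TC^-$-term is immediate: $\holim_n TC^-\big((L_n\bS)^{\mathrm{triv}}\big)\phat\htp\big((\holim_n(L_n\bS)\phat)^{hS^1}\big)\phat\htp TC^-(\bS)\phat$. Since for the trivial action $TP(X^{\mathrm{triv}})$ sits in a cofiber sequence $X\sma\Sigma^{\infty}_{+}\bC P^{\infty}\to X^{hS^1}\to X^{tS^1}$, the entire theorem reduces to showing that $-\sma\Sigma^{\infty}_{+}\bC P^{\infty}$ commutes with the chromatic homotopy limit after $p$-completion, i.e.\ that
\[
\Sigma^{\infty}_{+}\bC P^{\infty}\sma\big(\holim_n(L_n\bS)\phat\big)\;\longrightarrow\;\holim_n\big(\Sigma^{\infty}_{+}\bC P^{\infty}\sma(L_n\bS)\phat\big)
\]
is a $p$-adic equivalence (the left side being $\Sigma^{\infty}_{+}\bC P^{\infty}\sma\bS\phat$).

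This last equivalence is the heart of the matter and I expect it to be the main obstacle. Since $\Sigma^{\infty}_{+}\bC P^{\infty}$ is the filtered colimit of its (finite, hence dualizable) skeleta, each $\holim_n\big(\Sigma^{\infty}_{+}\bC P^{k}\sma(L_n\bS)\phat\big)$ is computed by chromatic convergence for the finite spectrum $\Sigma^{\infty}_{+}\bC P^{k}$ and equals $(\Sigma^{\infty}_{+}\bC P^{k})\phat$; the real issue is interchanging $\holim_n$ with the colimit over skeleta. One cannot simply invoke a uniform bound below, since $L_{E(n)}\bS$ acquires homotopy in arbitrarily negative degrees as $n\to\infty$. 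Instead I would argue on homotopy groups: because $(L_n\bS)\phat$ is bounded below, the Atiyah--Hirzebruch filtration of $\Sigma^{\infty}_{+}\bC P^{\infty}\sma(L_n\bS)\phat$ is finite in each degree, so each $\pi_d\big(\Sigma^{\infty}_{+}\bC P^{\infty}\sma(L_n\bS)\phat\big)$ is a finitely generated (hence compact) $\mathbb{Z}_p$-module; a tower of compact groups has vanishing $\lim^1$, so the Milnor sequence collapses, and chromatic convergence $\holim_n(L_n\bS)\phat\htp\bS\phat$ together with this same $\lim^1$-vanishing identifies the inverse limit, degree by degree, with $\pi_d\big(\Sigma^{\infty}_{+}\bC P^{\infty}\sma\bS\phat\big)=\pi_d\big((\Sigma^{\infty}_{+}\bC P^{\infty})\phat\big)$. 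Carrying this bookkeeping through and assembling it with the $TC^-$-comparison above gives $\holim_n TC(L_n(\bS\phat))\phat\htp TC(\bS)\phat$; combined with the known value $TC(\bS)\phat\htp\bS\phat\vee\Sigma\bCPim\phat$ this also identifies the limit explicitly. The subtle points, and the places I expect the genuine work, are (i) the careful $p$-complete identification of $THH(L_n(\bS\phat))$ with the constant cyclotomic spectrum on $(L_n\bS)\phat$ despite non-connectivity, and (ii) the non-uniform boundedness in the tower $\{L_n\bS\}$, which forces the compactness/$\lim^1$ argument in place of a naive exchange of limits.
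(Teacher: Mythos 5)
There is a genuine gap, and it sits exactly where you say you expect the heart of the matter to be. Your reduction runs through two structural claims about the chromatic localizations themselves: (i) that each $(L_n\bS)\phat$ is bounded below (used both to invoke the Nikolaus--Scholze fiber sequence $TC=\Fib(TC^-\to TP)$, which requires bounded-below input for the identification $(X^{tC_p})^{hC_{p^\infty}}\simeq X^{t\bT}$ after $p$-completion, and to make the Atiyah--Hirzebruch filtration of $\Sigma^\infty_+\bC P^\infty\sma(L_n\bS)\phat$ finite in each degree), and (ii) that $\pi_*(L_n\bS)\phat$ is degreewise finitely generated over $\bZ_p$, which powers your compactness/$\lim^1$ argument. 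Neither claim is established by the phrase ``Gross--Hopkins duality and chromatic fracture'': boundedness below of $L_n\bS$ for all $n$ is not known unconditionally (it is bound up with chromatic splitting/finiteness questions for $H^*_c(\mathbb{G}_n;\pi_*E_n)$ at general $n$ and small primes), and the asserted finite generation is in the same territory. Since the whole point of the theorem is to handle the non-connective $L_n\bS$ with no control over their negative homotopy, an argument whose engine is ``bounded below with compact homotopy in each degree'' does not close. (A smaller issue: for non-connective rings the classic and Nikolaus--Scholze $TC$ are genuinely different, and the theorem is asserted for both; your route only addresses the Nikolaus--Scholze version, and only under the unproven boundedness.)

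The paper's proof shows how to avoid all of this. Your first step is the same as the paper's ($L_n$ smashing gives $THH(L_n\bS)\htp L_n\bS$ with trivial $\bT$-action), but instead of passing to $TP$ and $\bC P^\infty$, one stays with the defining equalizer/limit of $TC(-;p)$, whose ingredients are homotopy orbits and homotopy fixed points for the finite groups $C_{p^k}$ (plus the fundamental cofiber sequence in the classic case). Smashing-ness gives $THH(L_n\bS)_{hC_{p^k}}\htp L_n(\Sigma^\infty_+BC_{p^k})$, so the only nonformal commutation needed is chromatic convergence for the single spectrum $\Sigma^\infty_+BC_{p^k}$ (and $\Sigma^\infty_+B\bT$), and this is supplied by Barthel's criterion: $BP_*(BC_{p^k})$ has projective dimension $1$ (Johnson--Wilson) and $BP_*(B\bT)$ is free, so $(\Sigma^\infty_+BG)_{(p)}\to\holim_n L_n(\Sigma^\infty_+BG)$ is an equivalence with no finiteness or connectivity input about $L_n\bS$ whatsoever. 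Homotopy fixed points then commute with the homotopy limit formally. If you want to salvage your outline, replace your $\lim^1$/compactness step (and the $TP$ reduction that forces boundedness) with this criterion applied to $BC_{p^k}$; as stated, your interchange of $\holim_n$ with the skeletal colimit is not justified and cannot be, by your own observation that there is no uniform bound below in $n$.
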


The ring spectra $L_{n}(\bS\phat)$ are non-connective, and for
non-connective ring spectra, we now have various potentially distinct
versions of $TC(-)\phat$.  Theorem~\ref{main} holds using either the
original constructions $TC(-)\phat$ and $TC(-;p)\phat$ of
Goodwillie~\cite{Goodwillie-MSRI} and
B\"okstedt-Hsiang-Madsen~\cite{BHM} (which are equivalent for all ring
spectra), or the constructions $TC(-)\phat$ and $TC(-;p)\phat$ of
Nikolaus-Scholze~\cite[II.1.8]{NikolausScholze}.

McClure and Staffeldt~\cite{McClureStaffeldt-Chromatic} proved a
version of Waldhausen's chromatic convergence conjecture for the
connective covers $\tau_{\geq 0} L_n \bS$ in place of $L_{n}\bS$ using
a direct calculation involving the plus construction.  Their argument
adapts in outline to $TC$ and we have a corresponding result in this case.

\begin{thm}\label{main:conn}
The natural map
\[
TC(\bS\phat)\phat \to \holim_n TC(\tau_{\geq 0}L_n (\bS\phat))\phat
\]
is a weak equivalence.
\end{thm}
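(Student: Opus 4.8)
The plan is to follow the argument of McClure and Staffeldt, replacing the role of algebraic $K$-theory by $TC$ by way of the Dundas--Goodwillie--McCarthy theorem. Write $R_n = \tau_{\geq 0}L_n(\bS\phat)$; these are connective ring spectra, and since $\bS\phat$ is connective as well, all of the variants of $TC(-)\phat$ in play agree on these inputs, so the statement is unambiguous. The map in question is the map $TC(\bS\phat)\phat \to \holim_n TC(R_n)\phat$ induced by the $\bS\phat \to R_n$, and its fiber is $\holim_n \widetilde T_n$, where $\widetilde T_n := \Fib\!\big(TC(\bS\phat)\phat \to TC(R_n)\phat\big)$. By the Milnor $\lim^1$ sequence of a tower, it is enough to show that for each fixed integer $k$ one has $\pi_k \widetilde T_n = 0$ for all sufficiently large $n$; in particular it suffices that the connectivity of $\widetilde T_n$ tends to $\infty$ with $n$.

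This breaks the theorem into two inputs. The first is the chromatic estimate underlying McClure and Staffeldt's calculation: \emph{the map $\bS\phat \to \tau_{\geq 0}L_n(\bS\phat)$ is $c_n$-connected with $c_n \to \infty$.} Indeed, for $n \geq 1$ the map is an equivalence below degree $1$ (both sides reduce to $\bZ_p$ in degree $0$), and for $k \geq 1$ we have $\pi_k \tau_{\geq 0}L_n(\bS\phat) = \pi_k L_n(\bS\phat)$; by the Hopkins--Ravenel chromatic convergence theorem this last group equals $\pi_k\bS\phat$ once $n$ is large, and the range of degrees in which this holds grows without bound (for instance, because the lowest-degree $v_{n+1}$-periodic homotopy class lies in a degree tending to $\infty$). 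Since the fiber of $\tau_{\geq 0}L_n(\bS\phat)\to L_n(\bS\phat)$ is coconnective, $\Fib(\bS\phat \to \tau_{\geq 0}L_n(\bS\phat))$ agrees in nonnegative degrees with $\Fib(\bS\phat \to L_n(\bS\phat))$ and is connective, which gives the stated connectivity bound after a glance at the long exact sequences. The second input is that \emph{$TC$ of connective ring spectra takes $c$-connected maps to maps that are $c$-connected up to a bounded shift}: for $n$ large the map $\bS\phat \to R_n$ is in particular an isomorphism on $\pi_0$, so the Dundas--Goodwillie--McCarthy theorem makes the ($p$-completed) square comparing $K$ and $TC$ homotopy cartesian, whence $\widetilde T_n \simeq \Fib\!\big(K(\bS\phat)\phat \to K(R_n)\phat\big)$; and algebraic $K$-theory of connective ring spectra sends $c$-connected maps to $(c+1)$-connected maps --- this is the step for which McClure and Staffeldt use the plus construction. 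Combining the two inputs, $\widetilde T_n$ has connectivity at least $c_n$, which tends to $\infty$, so $\holim_n \widetilde T_n \simeq 0$ and the theorem follows.

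The only nonformal point is the second input, and this is where I expect whatever work is not pure citation to lie. It is not enough to observe that $THH(\bS\phat) \to THH(R_n)$ is $c_n$-connected: neither $(-)^{tS^1}$ nor even $(-)^{hS^1}$ preserves connectivity, each being capable of introducing homotopy in arbitrarily negative degrees, so $TC^-$ and $TP$ do not individually respect connectivity of maps and the fiber sequence $TC = \Fib(TC^- \to TP)$ does not on its own bound $\widetilde T_n$. What the Dundas--Goodwillie--McCarthy theorem (or any direct connectivity estimate for relative $TC$) supplies is exactly that the relative term behaves like relative $K$-theory; granting that, the remainder is the bookkeeping above together with the chromatic estimate imported from Hopkins--Ravenel and McClure--Staffeldt.
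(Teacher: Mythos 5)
Your argument has a genuine gap at its ``first input'': the claim that $\bS\phat \to \tau_{\geq 0}L_n(\bS\phat)$ is $c_n$-connected with $c_n\to\infty$ does not follow from Hopkins--Ravenel and is not currently known. Chromatic convergence says that $\bS_{(p)}\to\holim_n L_n\bS$ is a weak equivalence, and its proof gives a \emph{pro-isomorphism} on each $\pi_q$; it does not give that for a fixed $q$ the map $\pi_q\bS_{(p)}\to\pi_q L_n\bS$ is an isomorphism once $n$ is large. A pro-isomorphism from a constant tower only says that the extra elements of $\pi_q L_n\bS$ die under the maps of the tower, not that they are absent for large $n$ (already $\pi_{-2}L_1\bS\neq 0$ illustrates the distinction), and in degree $0$ the groups $\pi_0 L_n\bS$ are simply not computed for general $n$. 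So your parenthetical ``both sides reduce to $\bZ_p$ in degree $0$'' is an open problem, and this breaks the proposal twice over: without $\pi_0$-control you cannot invoke Dundas--Goodwillie--McCarthy for $\bS\phat\to R_n$ at all, and without eventual degreewise isomorphisms you cannot conclude that the relative terms $\widetilde T_n$ have connectivity tending to infinity. The paper flags exactly this obstruction in its introduction, stating that the DGM route is infeasible given the current state of knowledge of $\pi_0 L_n\bS$. (Your second input, the connectivity estimate for relative $K$-theory of connective rings, is fine but moot.)

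What the paper does instead is work with the honest output of Hopkins--Ravenel --- the pro-isomorphism on homotopy groups --- and propagate it through the construction of $TC$ directly, never passing through $K$-theory. Using the homotopy theory of pro-spectra of Isaksen and Fausk--Isaksen, the pro-isomorphism for $\{\bS_{(p)}\}\to\{\tau_{\geq 0}L_n\bS\}$ is converted into essentially levelwise $m$-equivalences for all $m$, pushed through smash powers, the cyclic bar construction, and $C_{p^k}$-homotopy orbits (Borel equivariantly), then through the fundamental cofiber sequence and the five lemma to the genuine $C_{p^k}$-fixed points, and finally to $TC^k(-;p)$; an interchange of $\holim_k$ and $\holim_n$ then yields the theorem. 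If you want to salvage your outline, you would have to replace ``connectivity of $\widetilde T_n$ tends to infinity'' by a pro-triviality statement for the tower of relative terms and prove it without DGM --- at which point you are essentially forced into the pro-object argument the paper carries out.
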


One might hope to deduce the previous theorem directly from the
McClure-Staffeldt $K$-theory result using the Dundas-Goodwillie-McCarthy
pullback square.  However, our current state of knowledge of
$\pi_{0}L_{n}\bS$ makes this infeasible, and so we give a direct proof
along the lines of the McClure-Staffeldt argument instead.
The McClure-Staffeldt argument for Theorem~\ref{main:conn}
fundamentally relies on the Chromatic Convergence Theorem of Hopkins
and Ravenel~\cite[7.5.7]{Ravenel-Nilpotence} applied to $\bS$ or $\bS\phat$,
propagating it through the construction of $TC$.  Whereas in the
$K$-theory case considered by McClure-Staffeldt, work of
Bousfield-Kan~\cite[III\S3]{BousfieldKan} on pro-spaces suffices for the
propagation, the $TC$ case needs the more sophisticated theory of
Isaksen~\cite{Isaksen-ProSpectra} and
Fausk-Isaksen~\cite{FauskIsaksen} for pro-spectra and equivariant
pro-spectra. (See Section~\ref{sec:conn}.) 
The argument for Theorem~\ref{main} fits a similar outline but uses
the weak equivalence $THH(L_{n}\bS)\simeq L_{n}\bS$ to avoid
pro-object arguments and requires chromatic convergence at a later
step: it replaces the Hopkins-Ravenel Chromatic Convergence Theorem
with Barthel's criterion for chromatic
convergence~\cite[3.8]{Barthel-ChromaticConvergence} applied to
$\Sigma^{\infty}_{+} BC_{p^{k}}$.  (See Section~\ref{sec:nonconn}.)  Although not
directly related to Waldhausen's conjecture or the Rognes program,
Barthel's criterion also gives the following chromatic convergence
result, which is superficially similar to Theorem~\ref{main} and also
proved in Section~\ref{sec:nonconn}.

\begin{thm}\label{main:cc}
The natural map
\[
TC(\bS\phat)\phat \to \holim_n (L_{n}TC(\bS\phat))\phat
\]
is a weak equivalence.
\end{thm}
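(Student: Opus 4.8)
The plan is to run the argument for Theorem~\ref{main} with the identification $THH(\bS\phat)\phat\simeq\bS\phat$ (with trivial $S^1$-action) playing the role of $THH(L_n\bS)\simeq L_n\bS$. In the Nikolaus-Scholze formalism one has $TC(\bS\phat;p)\phat\simeq\operatorname{fib}\bigl(TC^{-}(\bS\phat;p)\phat\xrightarrow{\varphi-\can}TP(\bS\phat;p)\phat\bigr)$ with $TC^{-}(\bS\phat;p)\phat\simeq(\bS\phat)^{hS^1}$ and, after $p$-completion, $TP(\bS\phat;p)\phat\simeq\bigl((\bS\phat)^{tC_p}\bigr)^{hS^1/C_p}$; as the circle acts trivially, $(\bS\phat)^{tC_p}$ is the cofiber of the norm map $\Sigma^\infty_+BC_p\wedge\bS\phat\to(\bS\phat)^{hC_p}$ and $(\bS\phat)_{hC_{p^k}}\simeq\Sigma^\infty_+BC_{p^k}\wedge\bS\phat\simeq\Sigma^\infty_+BC_{p^k}{}\phat$. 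Either this or the B\"okstedt-Hsiang-Madsen model, with its genuine fixed points $\bS^{C_{p^k}}\simeq\bigvee_{j\le k}\Sigma^\infty_+BC_{p^j}$ and the $R$- and $F$-towers, exhibits $TC(\bS\phat)\phat$ as a homotopy limit of spectra assembled by finite homotopy (co)limits out of $\bS\phat$, the $p$-completed suspension spectra $\Sigma^\infty_+BC_{p^k}$, and the homotopy-fixed-point spectra $(\bS\phat)^{hC_{p^k}}$ and $(\bS\phat)^{hS^1}$; note that $\bC P^\infty\simeq BC_{p^\infty}$ after $p$-completion, so the last of these is itself the homotopy limit $\holim_k(\bS\phat)^{hC_{p^k}}$, and each $(\bS\phat)^{hC_{p^k}}$ is a homotopy limit of the function spectra on the skeleta of $BC_{p^k}$.

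First I would reduce the theorem to chromatic-convergence statements for these pieces. Both $L_n$ and the homotopy limit over the chromatic tower are exact, hence commute with the finite homotopy limits and cofiber sequences above, and $p$-completion commutes with every homotopy limit; therefore it is enough to show that $\bS\phat$, each $\Sigma^\infty_+BC_{p^k}{}\phat$, and each of the homotopy-limit constituents of $TC$ just described --- the homotopy-fixed-point spectra and the $R$- and $F$-tower limits --- is chromatically convergent. For $\bS\phat$ this follows from the Hopkins-Ravenel Chromatic Convergence Theorem~\cite[7.5.7]{Ravenel-Nilpotence} together with the fact that $p$-completion, being a homotopy limit, commutes with the chromatic tower.

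For all the remaining spectra I would apply Barthel's criterion for chromatic convergence~\cite[3.8]{Barthel-ChromaticConvergence}. Its hypotheses are checked using the Ravenel-Wilson computation that $K(n)_*(BC_{p^k})$ is a finitely generated $K(n)_*$-module for every $n\ge1$ --- the case $n=0$ reducing to $H_*(BC_{p^k};\bQ)$ being concentrated in degree $0$ --- which, via duality and the long exact sequences in Morava $K$-theory, controls the Morava $K$-theory of the function spectra, the homotopy-fixed-point spectra, and the tower limits built from the $\Sigma^\infty_+BC_{p^k}$.

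The main obstacle is exactly this last point: chromatic convergence is not preserved by arbitrary homotopy limits --- $L_n$ fails to commute with infinite homotopy limits, so a (even sequential) homotopy limit of chromatically convergent spectra need not be chromatically convergent --- and for this reason one cannot get by with the Hopkins-Ravenel theorem for finite spectra alone. The role of Barthel's sharper criterion is precisely to be applicable to the assembled homotopy-limit spectra occurring in $TC$, and not only to their finite constituents; verifying its hypotheses for these --- which is the step the introduction refers to as requiring ``chromatic convergence at a later step'' --- is the technical heart of the proof. Granting it, the rest is bookkeeping with the Nikolaus-Scholze (or B\"okstedt-Hsiang-Madsen) description of $TC$ and the standard compatibilities of $L_n$, $p$-completion, and homotopy (co)limits.
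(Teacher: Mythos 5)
Your proposal has a genuine gap, and it sits exactly where you place it yourself: you reduce the statement to chromatic convergence of the infinite homotopy-limit constituents of $TC$ --- $(\bS\phat)^{hC_{p^k}}$, $(\bS\phat)^{h\bT}$, the Tate construction, and the $R$/$F$- (or $\holim_k$-) towers --- and then defer the verification of Barthel's criterion for these assembled spectra as ``the technical heart of the proof.'' That verification is not a bookkeeping step that can be granted: Barthel's criterion, as used in the paper, requires $BP_{*}X$ to have finite projective dimension as a graded $BP_{*}$-module, and there is no evident way to compute or control $BP_{*}$ of homotopy fixed point spectra such as $(\bS\phat)^{h\bT}$ or of the inverse limits in the $TC$ tower (lim-one problems, no K\"unneth/duality control). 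Your suggested check via Ravenel--Wilson finiteness of $K(n)_{*}(BC_{p^k})$ addresses a different hypothesis than the one Barthel's criterion actually demands, so as written the plan does not close. Since $L_n$ fails to commute with the infinite limits in question (as you note), the reduction ``it is enough to show each constituent is chromatically convergent'' is not valid for those constituents, and the proposal never supplies the substitute argument.

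The missing idea in the paper's proof is the B\"okstedt--Hsiang--Madsen computation: $TC(\bS\phat;p)\phat\simeq TC(\bS;p)\phat$ is identified as $\bS\phat \vee \Fib(\mathrm{tr}\colon \Sigma\Sigma^{\infty}_{+}B\bT\to\bS)\phat$, where $\mathrm{tr}$ is the circle transfer. This replaces the infinite-limit presentation of $TC$ by a \emph{finite} construction (wedge, suspension, fiber) on the two spectra $\bS$ and $\Sigma^{\infty}_{+}B\bT$. Then one only needs chromatic convergence of $\bS$ (Hopkins--Ravenel) and of $\Sigma^{\infty}_{+}B\bT$, the latter by Barthel's criterion because $BP_{*}(B\bT)$ is a free $BP_{*}$-module by the Atiyah--Hirzebruch spectral sequence (this is Corollary~\ref{cor:Bp}); since $L_n$, $\holim_n$, and $p$-completion commute with the finite wedge, suspension, and fiber, the theorem follows. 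Your outline, by contrast, applies Barthel's criterion to the wrong objects (the unsplit limit pieces of $TC$ rather than the suspension spectra appearing after the BHM splitting), which is why it stalls at the step you flag; note also that the spectra $\Sigma^{\infty}_{+}BC_{p^k}$ and the Johnson--Wilson projective-dimension-one computation enter the paper only in the proof of Theorem~\ref{main}, not of Theorem~\ref{main:cc}.
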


We have stated Theorems~\ref{main}, \ref{main:conn}, and~\ref{main:cc}
in terms of $\bS\phat$, but the corresponding results with $\bS\phat$
replaced everywhere by $\bS$ or $\bS_{(p)}$ also hold; see
Theorem~\ref{thm:TCS}.  Indeed, the map $TC(R)\phat\to
TC(R\phat)\phat$ is a weak equivalence quite generally; we give precise statements in Propositions~\ref{prop:classicp},
\ref{prop:NSp}, and~\ref{prop:pcompl}.  In the proofs, we concentrate on the
case of $\bS_{(p)}$, which implies the other cases.

\subsection*{Conventions}
In this paper, when a point-set category of spectra is necessary or
implicit in a statement, we understand ``spectrum'' to mean orthogonal
spectrum indexed on $\{\bR^{n}\}$;  ``ring spectrum'' means
associative ring orthogonal spectrum (or the equivalent) and
``commutative ring spectrum'' means commutative ring orthogonal
spectrum (or the equivalent).  We use $\bT$ to denote the circle group
(the unit length complex numbers) and $C_{m}<\bT$ its cyclic subgroup
of order $m$.

\subsection*{Acknowledgments}
The authors thank Mike Hopkins and John Rognes for helpful conversations.

\section{$TC$ of a ring spectrum and its $p$-completion}
\label{sec:review}

In this section, we give a concise review of the construction of
topological cyclic homology in order to introduce notation and details
used in later sections. 
We discuss both the ``classic'' definition of a cyclotomic spectrum as
introduced by Goodwillie~\cite{Goodwillie-MSRI} and
B\"okstedt-Hsiang-Madsen~\cite{BHM} as well as the recent reformulation
by Nikolaus-Scholze in terms of the Tate fixed point spectrum.

In the classic case, definitions of point-set categories of cyclotomic
spectra and/or $p$-cyclotomic spectra can be found for example
in~\cite[1.1]{BokstedtMadsen-TCZ} (original source),
\cite[2.4.4]{Madsen-Traces}, \cite[2.2]{HM2}, \cite[\S1.1]{HMAnnals},
to name a few.  The notion of the homotopy theory and therefore
$\infty$-category of classic cyclotomic spectra and $p$-cyclotomic
spectra are clear but implicit in these formulations; the
paper~\cite{BM-cycl} sets up a model* category of $p$-cyclotomic
spectra for the purpose of making this explicit.  The details involve
the genuine $\bT$-equivariant stable homotopy theory localized at the
$\aF_{p}$-equivalences, meaning that a map $X\to Y$ is a weak
equivalence when it is a weak equivalence on (derived genuine
categorical) $C_{p^{k}}$-fixed points
\[
X^{C_{p^{k}}}\to Y^{C_{p^{k}}}
\]
for all $k\geq 0$.  (Here ``genuine'' means stable with respect all
finite dimensional orthogonal $\bT$-representations: for $V$ any
orthogonal $\bT$-action on $\bR^{n}$ for some $n$, the endofunctor
$\Sigma^{V}$ is a categorical equivalence.) The (derived) $C_{p}$
geometric fixed point functor $\Phi^{C_{p}}$ (or $X\mapsto X^{\Phi
C_{p}}$) goes from genuine $\aF_{p}$-local $\bT$-spectra
to genuine $\aF_{p}$-local $\bT/C_{p}$-spectra and the
$p$th root isomorphism $\rho_{p} \colon \bT\to \bT/C_{p}$ gives a
categorical equivalence $\rho^{*}_{p}$ from genuine $\aF_{p}$-local
$\bT/C_{p}$-spectra to genuine $\aF_{p}$-local
$\bT$-spectra.  The \term{structure map} of a classic
$p$-cyclotomic spectrum is a weak equivalence
\[
r\colon \rho^{*}_{p}X^{\Phi C_{p}}\to X.
\]
Construction~5.11 and Corollary~5.13 in \cite{BM-cycl} identify the
mapping spectrum in $p$-cyclotomic spectra $F_{\cyc}(X,Y)$ (for
$p$-cyclotomic spectra $X,Y$) and the underlying mapping space is the
space of homotopy commuting diagrams of structure maps; this in
particular identifies the $\infty$-category of $p$-cyclotomic spectra
as the full subcategory of the lax equalizer (in the sense
of~\cite[II.1.4]{NikolausScholze}) from the $C_{p}$ geometric fixed
point to the identity functors of those objects for which the map is
an isomorphism.   

Barwick-Glasman~\cite{BarwickGlasman-Cyclonic} sets up an equivalent
$\infty$-category of $p$-cyclotomic spectra in a different
$\infty$-categorical framework using a spectral Mackey functor
inspired model for equivariant stable homotopy theory.  This theory
contains features equivalent to those discussed in the previous
paragraph (cf.\ the proof of Theorem~3.23 there); the remainder of this
section and Section~\ref{sec:nonconn} adapt to the
Barwick-Glasman~\cite{BarwickGlasman-Cyclonic} context without
difficulty.  We expect that the arguments in Section~\ref{sec:conn}
also adapt either by choosing specific point-set models or by adapting
the model theoretic arguments of Isaksen~\cite{Isaksen-ProSpectra} and
Fausk-Isaksen~\cite{FauskIsaksen} used there.

A classic $p$-cyclotomic spectrum $X$ comes with two maps $X^{C_{p^{k}}}\to
X^{C_{p^{k-1}}}$: 
\begin{enumerate}
\item[$F$:] the canonical inclusion of fixed points, and
\item[$R$:] the composite 
\[
X^{C_{p^{k}}}\iso (\rho^{*}_{p}X^{C_{p}})^{C_{p^{k-1}}}\to 
(\rho^{*}_{p}X^{\Phi C_{p}})^{C_{p^{k-1}}}\overto{\simeq}
X^{C_{p^{k-1}}}
\]
\end{enumerate}
induced by the canonical map from the fixed points to geometric fixed
points and the $p$-cyclotomic structure map. 

\begin{defn}[Classic $TC$]\label{defn:classicTC}
For a classic $p$-cyclotomic spectrum $X$ and $k>0$, let
\[
TC^{k}(X;p)=\hoeq\bigl(R,F\colon X^{C_{p^{k}}}\rightrightarrows X^{C_{p^{k-1}}}\bigr),
\]
the homotopy equalizer of $R,F$.  Let 
\[
TC(X;p)=\holim_{k} TC^{k}(X;p).
\]
\end{defn}

Goodwillie~\cite[12.1]{Goodwillie-MSRI} first proposed an integral
form of $TC$ for cyclotomic spectra.  It is easier to take its
fundamental property~\cite[14.2]{Goodwillie-MSRI} and turn it into a
definition as in
Dundas-Goodwillie-McCarthy~\cite[VI.3.3.1]{DundasGoodwillieMcCarthy}:
define $TC(X)$ as the homotopy pullback 
\[
\xymatrix{%
TC(X)\ar[r]\ar[d]&X^{h\bT}\ar[d]\\
\prod_{p}TC(X;p)\phat\ar[r]
&\prod_{p} (\holim_{k}X^{hC_{p^{k}}})\phat
}
\]
where the map $TC(X;p)\to X^{hC_{p^{k}}}$ is the composite
\[
TC(X;p)\to TC^{k}(X;p)\to X^{C_{p^{k}}}\to X^{hC_{p^{k}}}.
\]
We note that the map $X^{h\bT}\to \holim_{k} X^{hC_{p^{k}}}$ becomes a
weak equivalence after $p$-completion (see, for
example,~\cite[VI.2.1.1]{DundasGoodwillieMcCarthy}); this immediately
implies the following well-known result.

\begin{prop}[Goodwillie~{\cite[14.1.(ii)]{Goodwillie-MSRI}}]
\label{prop:classicp}
Let $X$ be a classic cyclotomic spectrum.  The map $TC(X)\phat\to
TC(X;p)\phat$ is a weak equivalence.
\end{prop}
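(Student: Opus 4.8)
The plan is to $p$-complete the homotopy pullback square that defines $TC(X)$; after identifying the $p$-completions of the two infinite products appearing there with their $p$-th factors, the map $X^{h\bT}\to\holim_{k}X^{hC_{p^{k}}}$ shows up as one leg of the square, and the remark made just above the statement---that this map is a $p$-equivalence (e.g.\ \cite[VI.2.1.1]{DundasGoodwillieMcCarthy})---then forces $TC(X)\phat\to TC(X;p)\phat$ to be a weak equivalence.

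First I would record two formal facts about $p$-completion of spectra. (i) It commutes with finite homotopy limits: since $\bS/p^{n}$ is a finite, hence dualizable, spectrum, the functor $Y\mapsto Y\sma\bS/p^{n}$ is an internal hom and so preserves all limits, and $(-)\phat=\holim_{n}\bigl((-)\sma\bS/p^{n}\bigr)$ is therefore a limit-preserving composite followed by a sequential homotopy limit. (ii) It commutes with arbitrary products (again being a homotopy limit), and it annihilates every $q$-complete spectrum for a prime $q\neq p$, because on such a spectrum multiplication by $p$ is invertible and hence each $(-)\sma\bS/p^{n}$ vanishes. Since $TC(X;q)\phat$ and $(\holim_{k}X^{hC_{q^{k}}})\phat$ are $q$-complete, fact (ii) shows that $p$-completing the two infinite products in the defining square collapses each to its $p$-th factor: $\bigl(\prod_{q}TC(X;q)\phat\bigr)\phat\htp TC(X;p)\phat$ and $\bigl(\prod_{q}(\holim_{k}X^{hC_{q^{k}}})\phat\bigr)\phat\htp(\holim_{k}X^{hC_{p^{k}}})\phat$, compatibly with the projections onto the $p$-th factor.

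Applying fact (i) to the defining pullback square and substituting these identifications, I get a homotopy pullback square
\[
\xymatrix{
TC(X)\phat\ar[r]\ar[d]&(X^{h\bT})\phat\ar[d]\\
TC(X;p)\phat\ar[r]&(\holim_{k}X^{hC_{p^{k}}})\phat
}
\]
whose right-hand vertical map is the $p$-completion of $X^{h\bT}\to\holim_{k}X^{hC_{p^{k}}}$, and hence a weak equivalence by the cited remark. As the square is a homotopy pullback, the left-hand vertical map $TC(X)\phat\to TC(X;p)\phat$ is a weak equivalence as well.

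The step most likely to need care is (ii), the observation that $p$-completion kills the off-prime factors of products indexed over all primes $q$; this, together with the (also routine) check that the map named in the statement really is the one obtained by projecting the completed bottom-left corner onto its $p$-th factor, is all that stands between the formal manipulation of homotopy limits and the conclusion. I do not expect a genuine obstacle.
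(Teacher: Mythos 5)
Your argument is correct and is exactly the paper's (implicit) proof: the paper's one-line justification---that $X^{h\bT}\to\holim_k X^{hC_{p^k}}$ is a $p$-equivalence---is meant to "immediately imply" the proposition by $p$-completing the defining pullback square and discarding the $q$-complete factors for $q\neq p$, which is precisely the formal manipulation you spell out. No gaps; you have simply made explicit the routine steps the paper leaves to the reader.
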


Because of this proposition, we can use $TC(X;p)$ exclusively in the
classic context and deduce results for $TC(X)\phat$ from the
equivalence.

Nikolaus-Scholze~\cite[II.1.6]{NikolausScholze} redefine cyclotomic
spectra and $p$-cyclotomic spectra in 
terms of Borel equivariant spectra: for a topological group $G$, the
homotopy theory or $\infty$-category of Borel $G$-spectra is
represented by the (relative) category of (left) $G$-objects in orthogonal
spectra, where we take the weak equivalences to be maps that are weak
equivalences of the underlying spectra. The
underlying object of a cyclotomic spectrum is a Borel $\bT$-spectrum.
The underlying object of a $p$-cyclotomic spectrum is a 
Borel $C_{p^{\infty}}$-spectrum, where $C_{p^{\infty}}=\colim_{k}
C_{p^{k}}$.  The structure map for a $p$-cyclotomic spectrum $X$ is a
map of Borel $C_{p^{\infty}}$-spectra
\[
\phi \colon X\to \rho^{*}_{p}(X^{tC_{p}})
\]
where $tC_{p}$ denotes the Tate fixed point spectrum (the fixed points
of the Tate spectrum of~\cite[p.~3]{GreenleesMay-Tate}) and $\rho_{p}$
is (as above) the $p$th root isomorphism $\bT\iso \bT/C_{p}$.  A
cyclotomic spectrum has such a structure map $\phi_{p}$ for each $p$,
which is required to be a map of Borel $\bT$-spectra, and we write
$\phi$ for the induced map into the product
\[
\phi \colon X\to \prod_{p}\rho^{*}_{p}(X^{tC_{p}}).
\]
Precisely, the $C_{p}$-Tate fixed points is a functor
from Borel $\bT$-spectra to Borel $\bT/C_{p}$-spectra or from Borel
$C_{p^{\infty}}$-spectra to Borel $C_{p^{\infty}}/C_{p}$-spectra and
the composite with $\rho^{*}_{p}$ then gives an endofunctor of Borel
$\bT$-spectra or Borel $C_{p}^{\infty}$-spectra.  The
$\infty$-category of $p$-cyclotomic spectra is the lax
equalizer (in the sense of~\cite[II.1.4]{NikolausScholze}) from the identity to the
functor $\rho^{*}_{p}(-)^{tC_{p}}$ and
the $\infty$-category of cyclotomic spectra is the lax equalizer from
the identity to the product (over primes $p$) of the functors
$\rho^{*}_{p}(-)^{tC_{p}}$ \cite[II.1.6]{NikolausScholze}.

For a Borel $\bT$-spectrum or Borel $C_{p^{\infty}}$-spectrum $X$, we
have a canonical map of Borel $\bT/C_{p}$-spectra or Borel
$C_{p^{\infty}}/C_{p}$-spectra from the $C_{p}$ homotopy fixed
points to the $C_{p}$-Tate fixed points
\[
X^{hC_{p}}\to X^{tC_{p}}
\]
and a canonical isomorphism 
\[
X^{h\bT}\iso (\rho^{*}_{p}(X^{hC_{p}}))^{h\bT}\quad \text{or}\quad 
X^{hC_{p^{\infty}}}\iso (\rho^{*}_{p}(X^{hC_{p}}))^{hC_{p^{\infty}}}.
\]
We then obtain a canonical map
\[
\can\colon X^{h\bT}\to \prod_{p} (\rho^{*}_{p}(X^{tC_{p}}))^{h\bT}\quad \text{or}\quad 
\can\colon X^{hC_{p^{\infty}}}\to (\rho^{*}_{p}(X^{tC_{p}}))^{hC_{p^{\infty}}}.
\]
In the case when $X$ is a cyclotomic spectrum or $p$-cyclotomic
spectrum, we have another map with the same domain and codomain
obtained by taking the homotopy fixed points of $\phi$
\[
\phi^{h\bT}\colon X^{h\bT}\to \prod_{p} (\rho^{*}_{p}(X^{tC_{p}}))^{h\bT}\quad \text{or}\quad 
\phi^{hC_{p^{\infty}}}\colon X^{hC_{p^{\infty}}}\to (\rho^{*}_{p}(X^{tC_{p}}))^{hC_{p^{\infty}}}.
\]
Following~\cite{BM-cycl},
Nikolaus-Scholze~\cite[II.1.8]{NikolausScholze} define $TC(X)$ and
$TC(X;p)$ as mapping spectra but show in
\cite[II.1.9]{NikolausScholze} that they may be computed as the
homotopy equalizer of $\can$ and $\phi^{h\bT}$ or $\can$ and
$\phi^{hC_{p^{\infty}}}$; we use this theorem as a definition.

\begin{defn}[Nikolaus-Scholze $TC$ {\cite[II.1.9]{NikolausScholze}}]
For a Nikolaus-Scholze cyclotomic spectrum $X$, define 
\[
TC(X)=\hoeq \bigl(\can,\phi^{h\bT}\colon X^{h\bT}\rightrightarrows \prod_{p} (\rho^{*}_{p}(X^{tC_{p}}))^{h\bT} \bigr).
\]
For a Nikolaus-Scholze $p$-cyclotomic spectrum $X$, define 
\[
TC(X;p)=\hoeq \bigl(\can,\phi^{hC_{p^{\infty}}}\colon X^{hC_{p^{\infty}}}\rightrightarrows (\rho^{*}_{p}(X^{tC_{p}}))^{hC_{p^{\infty}}} \bigr).
\]
\end{defn}

We have a forgetful functor from cyclotomic spectra to $p$-cyclotomic
spectra and projection onto the $p$ factor induces a map $TC(X)\to
TC(X;p)$.  The following proposition is well-known and implicit
in~\cite{NikolausScholze}. 

\begin{prop}
\label{prop:NSp}
Let $X$ be a Nikolaus-Scholze cyclotomic spectrum. If $X$ is either
bounded below or $p$-local, then the map $TC(X)\phat\to TC(X;p)\phat$ is
a weak equivalence.
\end{prop}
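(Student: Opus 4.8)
The plan is to realize the equalizer defining $TC(X)$ as built over the set of primes, to show that under the stated hypotheses every factor of its target away from $p$ has trivial $p$-completion, and then to identify the resulting $p$-completed equalizer with the one computing $TC(X;p)\phat$ by passing from $\bT$- to $C_{p^{\infty}}$-homotopy fixed points.

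First the formal part. The homotopy equalizer of a pair of maps of spectra is the fiber of their difference, and $p$-completion of spectra is exact and commutes with all the homotopy limits at hand: products, $(-)^{h\bT}$, $(-)^{hC_{p^{\infty}}}$, and the change-of-group functors $\rho^{*}_{q}$. Consequently $TC(X)\phat$ is the homotopy equalizer of the $p$-completions of $\can,\phi^{h\bT}\colon X^{h\bT}\to\prod_{q}(\rho^{*}_{q}(X^{tC_{q}}))^{h\bT}$, and $TC(X;p)\phat$ is the homotopy equalizer of the $p$-completions of $\can,\phi^{hC_{p^{\infty}}}\colon X^{hC_{p^{\infty}}}\to(\rho^{*}_{p}(X^{tC_{p}}))^{hC_{p^{\infty}}}$. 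On these models the comparison map $TC(X)\to TC(X;p)$ is, by construction, the projection onto the $p$-th factor $\prod_{q}(\rho^{*}_{q}(X^{tC_{q}}))^{h\bT}\to(\rho^{*}_{p}(X^{tC_{p}}))^{h\bT}$ followed by the restriction maps $Y^{h\bT}\to Y^{hC_{p^{\infty}}}$ along $C_{p^{\infty}}\subset\bT$, which are compatible with $\can$ and with $\phi_{p}$.

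The single nonformal ingredient is:
\[
\text{for every prime }q\neq p,\qquad (X^{tC_{q}})\phat\htp 0.
\]
Granting this, the $p$-completion of the target product $\prod_{q}(\rho^{*}_{q}(X^{tC_{q}}))^{h\bT}$ collapses onto its $q=p$ factor, so $TC(X)\phat$ is the homotopy equalizer of the $p$-completed maps $\can_{p},\phi_{p}^{h\bT}\colon X^{h\bT}\to(\rho^{*}_{p}(X^{tC_{p}}))^{h\bT}$, and the comparison map becomes the one induced by $X^{h\bT}\to X^{hC_{p^{\infty}}}$ and $(\rho^{*}_{p}(X^{tC_{p}}))^{h\bT}\to(\rho^{*}_{p}(X^{tC_{p}}))^{hC_{p^{\infty}}}$. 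Since $C_{p^{\infty}}=\colim_{k}C_{p^{k}}$, for any Borel $\bT$-spectrum $Y$ we have $Y^{hC_{p^{\infty}}}\htp\holim_{k}Y^{hC_{p^{k}}}$, while $Y^{h\bT}\to\holim_{k}Y^{hC_{p^{k}}}$ becomes an equivalence after $p$-completion (cf.\ \cite[VI.2.1.1]{DundasGoodwillieMcCarthy}, which applies to any Borel $\bT$-spectrum). Hence both restriction maps, and therefore $TC(X)\phat\to TC(X;p)\phat$, are equivalences.

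It remains to prove the displayed vanishing, and this is the only step that uses the hypothesis. If $X$ is bounded below then $X^{tC_{q}}$ is $q$-complete --- the Tate spectral sequence $\hat{H}^{*}(C_{q};\pi_{*}X)\Rightarrow\pi_{*}(X^{tC_{q}})$ is strongly convergent with $E_{2}$-terms annihilated by $q$ --- and $p$ acts invertibly on a $q$-complete spectrum, so its $p$-completion is trivial. If instead $X$ is $p$-local then $X$ is canonically a module over $\bS_{(p)}$, and naturality of the $C_{q}$-action upgrades this to a module structure over $\bS_{(p)}$ (with trivial action) in Borel $C_{q}$-spectra; since $\pi_{*}\bS_{(p)}$ is uniquely $q$-divisible we get $\hat{H}^{*}(C_{q};\pi_{*}\bS_{(p)})=0$, and because $\bS_{(p)}$ is connective its Tate spectral sequence forces $(\bS_{(p)})^{tC_{q}}\htp 0$; then the lax symmetric monoidal functor $(-)^{tC_{q}}$ exhibits $X^{tC_{q}}$ as a module over $(\bS_{(p)})^{tC_{q}}\htp 0$, so $X^{tC_{q}}\htp 0$. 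I expect the main difficulty to be organizational rather than conceptual --- verifying that $p$-completion commutes with each homotopy limit invoked and that the comparison map has the stated description on the equalizer models --- the genuine content being exactly the two-case vanishing of $(X^{tC_{q}})\phat$ for $q\neq p$.
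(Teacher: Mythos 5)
Your proposal is correct and follows essentially the same route as the paper: show that after $p$-completion the factors $(\rho^{*}_{q}(X^{tC_{q}}))^{h\bT}$ for $q\neq p$ vanish ($q$-completeness of $X^{tC_{q}}$ for bounded-below $X$, exactly the content of \cite[I.2.9]{NikolausScholze}, and outright vanishing of $X^{tC_{q}}$ for $p$-local $X$), and then compare $(-)^{h\bT}$ with $(-)^{hC_{p^{\infty}}}$ via \cite[VI.2.1.1]{DundasGoodwillieMcCarthy}. The only cosmetic difference is in the $p$-local case, where you deduce $X^{tC_{q}}\htp 0$ from an $\bS_{(p)}$-module structure and lax monoidality of the Tate construction, while the paper computes directly that the norm map is an equivalence on homotopy groups.
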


\begin{proof}
Let $\ell\neq p$ be prime.  If $X$ is $p$-local, then multiplication
by $\ell$ is an isomorphism on $X$; the canonical maps
\[
(\pi_{*}X)/C_{\ell}\to \pi_{*}(X_{hC_{\ell}}) \quad \text{and}\quad
\pi_{*}(X^{hC_{\ell}})\to (\pi_{*}(X))^{C_{\ell}}
\]
are then isomorphisms and $X^{tC_{\ell}}$ is trivial.   If $X$
is bounded below, then 
by~\cite[I.2.9]{NikolausScholze}, $X^{tC_{\ell}}$ is $\ell$-complete,
$(\rho^{*}_{\ell}(X^{tC_{\ell}}))^{h\bT}$ is $\ell$-complete, and the
$p$-completion of $(\rho^{*}_{\ell}(X^{tC_{\ell}}))^{h\bT}$ is trivial.
Thus, if $X$ is either $p$-local or bounded below, the map
\[
TC(X)\to \hoeq\bigl(\can,\phi_{p}^{h\bT}\colon
X^{h\bT}\rightrightarrows (\rho^{*}_{p}X^{tC_{p}})^{h\bT}\bigr)
\]
is a $p$-equivalence. The maps 
\[
X^{h\bT}\to X^{hC_{p^{\infty}}}\quad\text{and}\quad
(\rho^{*}_{p}X^{tC_{p}})^{h\bT}\to (\rho^{*}_{p}X^{tC_{p}})^{hC_{p^{\infty}}}
\]
are also $p$-equivalences (for example, by
\cite[VI.2.1.1]{DundasGoodwillieMcCarthy}) and it follows that the
map $TC(X)\phat\to TC(X;p)\phat$ is a weak equivalence.
\end{proof}

In particular, if all cyclotomic spectra under consideration are
either connective or $p$-local, as is the case in the statement of
Theorem~\ref{main}, results on $TC(-;p)\phat$ automatically imply the
corresponding results on $TC(-)\phat$.

The Nikolaus-Scholze setting also has an analogue of $TC^{k}(X;p)$:
let 
\[
TC^{k}(X;p)=
\hoeq\bigl(\can,\phi^{hC_{p^{k-1}}}\colon X^{hC_{p^{k}}}
\rightrightarrows (X^{tC_{p}})^{hC_{p^{k-1}}}\bigr).
\]
Since $(-)^{hC_{p^{\infty}}}\iso \holim (-)^{hC_{p^{k}}}$, we then
have an equivalence
\[
TC(X;p)\iso \holim_{k} TC^{k}(X;p).
\]
Nikolaus-Scholze~\cite[II.4.10]{NikolausScholze} (and its proof) show
that when $X$ is bounded below, we have a weak equivalence from
classic $TC^{k}(X;p)$ to Nikolaus-Scholze $TC^{k}(X;p)$, which in the
homotopy limit gives a weak equivalence from classic $TC(X;p)$ to
Nikolaus-Scholze $TC(X;p)$.

Finally, for a ring spectrum $R$, we get a cyclotomic spectrum
$THH(R)$, first constructed by B\"okstedt~\cite{Bokstedt}.  Now
several equivalent constructions exist~\cite{ABGHLM},
\cite[III]{NikolausScholze}; see~\cite[\S1]{DMPSW} for a discussion
and comparison.  We caution that it is standard abuse of notation to
write $TC(R)$ and $TC(R;p)$ for $TC(THH(R))$ and $TC(THH(R);p)$, and
this usually causes no confusion.

We need the fact from Patchkoria-Sagave~\cite[3.8]{PatchkoriaSagave}
that as a Borel $\bT$-spectrum $THH(R)$ is weakly equivalent to the
(left derived) cyclic bar construction $N^{cy}(R)$, constructed out of
(derived) smash powers of $R$.  It is a fact inherited from the smash
power that when $R$ is connective, so is $THH(R)$ and also that the
natural map 
\[
THH(R)\phat \to THH(R\phat)\phat
\]
is a weak equivalence of Borel $\bT$-spectra, and \textit{a fortiori}
(see~\cite[5.5]{BM-cycl}) a weak equivalence of both classic and
Nikolaus-Scholze cyclotomic spectra.

It is well-known to experts (see, for example, \cite[Add.~6.2]{HM2}
or~\cite[6.10]{BM-cycl}) that the $p$-completion of $TC(X;p)$ only
depends on the $p$-completion of $X$. In the classic case,
this is because $p$-completion commutes with fixed points.  In the
Nikolaus-Scholze case, this is because $p$-completion commutes with
homotopy fixed points and the natural map
\[
(X^{tC_{p}})\phat\to ((X\phat)^{tC_{p}})\phat
\]
is a weak equivalence for any Borel $C_{p}$-spectrum.  We summarize in
the following proposition.

\begin{prop}\label{prop:pcompl}
In either the classic or Nikolaus-Scholze setting, for any
$p$-cyclotomic spectrum $X$, the natural map
\[
TC(X;p)\phat\to TC(X\phat;p)\phat
\]
is a weak equivalence and for any ring spectrum $R$, the natural map
\[
TC(R;p)\phat\to TC(R\phat;p)\phat
\]
is a weak equivalence.
\end{prop}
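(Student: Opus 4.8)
The plan is to separate the two assertions and to treat each of the classic and Nikolaus--Scholze settings in turn, deriving the second assertion (for ring spectra $R$) from the first (for $p$-cyclotomic spectra $X$) by applying it to $X = THH(R)$. The key input for the ring spectrum case is the Patchkoria--Sagave identification of $THH(R)$ with the cyclic bar construction $N^{cy}(R)$ recalled just above: since $N^{cy}(R)$ is built from derived smash powers of $R$, the natural map $N^{cy}(R)\phat \to N^{cy}(R\phat)\phat$ is a weak equivalence of Borel $\bT$-spectra (a smash power of a $p$-equivalence is a $p$-equivalence), hence a weak equivalence of both flavors of cyclotomic spectra. Granting the first assertion, one then has weak equivalences
\[
TC(R;p)\phat = TC(THH(R);p)\phat \overto{\simeq} TC(THH(R)\phat;p)\phat
\overto{\simeq} TC(THH(R\phat)\phat;p)\phat \overto{\simeq} TC(THH(R\phat);p)\phat = TC(R\phat;p)\phat,
\]
where the middle equivalence uses the weak equivalence $THH(R)\phat \simeq THH(R\phat)\phat$ and the outer two use the first assertion applied to $THH(R)$ and to $THH(R\phat)$. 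So everything reduces to the statement about $p$-cyclotomic spectra.

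For the first assertion in the classic setting, the strategy is to commute $p$-completion past the construction of $TC(X;p)$ from Definition~\ref{defn:classicTC}. Each $TC^{k}(X;p)$ is the homotopy equalizer of $R,F\colon X^{C_{p^{k}}}\rightrightarrows X^{C_{p^{k-1}}}$, and $TC(X;p)=\holim_{k}TC^{k}(X;p)$; since $p$-completion (as a finite localization) commutes with finite homotopy limits such as homotopy equalizers and with countable homotopy limits such as $\holim_k$ up to the usual $\lim^1$ considerations, it suffices to know that the natural map $(X^{C_{p^{k}}})\phat \to ((X\phat)^{C_{p^{k}}})\phat$ is a weak equivalence. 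This is the assertion that $p$-completion commutes with genuine categorical fixed points, which I would justify by noting that $p$-completion of a $\bT$-spectrum can be performed levelwise/cellularly and that genuine fixed points $(-)^{C_{p^k}}$ is a right Quillen functor preserving the relevant cellular structure, so that the comparison map is built from the comparison map on cells, where it holds by construction. One must also check that the maps $R$ and $F$ are compatible with $p$-completion; for $F$ this is clear, and for $R$ it follows because $p$-completion also commutes with the geometric fixed point functor $\Phi^{C_p}$ and with $\rho_p^*$, both again being cellular operations.

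For the first assertion in the Nikolaus--Scholze setting, $TC(X;p)=\hoeq(\can,\phi^{hC_{p^\infty}}\colon X^{hC_{p^\infty}}\rightrightarrows (\rho_p^*(X^{tC_p}))^{hC_{p^\infty}})$, so again by commuting $p$-completion past the homotopy equalizer and the homotopy fixed points (both of which it does, being a finite localization and $(-)^{hC_{p^\infty}} = \holim_k (-)^{hC_{p^k}}$ a countable homotopy limit) it suffices to know that $p$-completion commutes with homotopy fixed points—routine—and that the natural map $(X^{tC_p})\phat \to ((X\phat)^{tC_p})\phat$ is a weak equivalence for any Borel $C_p$-spectrum $X$, which is the assertion flagged above. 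I expect this last point to be the main obstacle: unlike the connective case it cannot be reduced to a boundedness argument, but it holds in general because the Tate construction $(-)^{tC_p}$ is an exact functor that commutes with the (co)fiber sequence defining $p$-completion, or more directly because $(-)^{tC_p}$ commutes with the $\bmod\ p$ Moore-spectrum smash and with the relevant sequential homotopy limit; once the comparison map on each $X/p^n = X\sma S/p^n$ is seen to be an equivalence (indeed $(X/p^n)^{tC_p} \simeq (X^{tC_p})/p^n$ by exactness), passing to the limit over $n$ gives the claim. Finally one checks that $\can$ and $\phi$ are natural with respect to these operations—$\can$ since it is built from the canonical maps $X^{hC_p}\to X^{tC_p}$ which are natural, and $\phi$ since the $p$-completed structure map of $X\phat$ is by definition the $p$-completion of that of $X$—so the homotopy equalizers match up after $p$-completion, completing the proof.
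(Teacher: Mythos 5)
Your proposal is correct and follows essentially the same route as the paper, whose proof is really the paragraph preceding the proposition: $p$-completion commutes with genuine fixed points (classic case), and with homotopy fixed points together with the $p$-completed Tate comparison $(X^{tC_{p}})\phat\simeq((X\phat)^{tC_{p}})\phat$ (Nikolaus--Scholze case), while the ring-spectrum statement reduces to the cyclotomic one via the equivalence $THH(R)\phat\simeq THH(R\phat)\phat$ of cyclotomic spectra noted just before the proposition. The only quibble is terminological: $p$-completion is not a finite (smashing) localization, but since it is $\holim_{n}(-\sma S/p^{n})$ with $S/p^{n}$ finite it commutes with all homotopy limits, so your commutation steps go through without any $\lim^{1}$ caveats.
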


For the purposes of our main results, we have proved the following
theorem.

\begin{thm}\label{thm:TCS}
In either the classic or the Nikolaus-Scholze setting, all pictured natural
maps 
\[
\xymatrix@C-1.5pc@R-1.25pc{%
TC(\bS)\phat\ar[r]\ar[d]&TC(\bS_{(p)})\phat\ar[r]\ar[d]&TC(\bS\phat)\phat\ar[d]\\
TC(\bS;p)\phat\ar[r]&TC(\bS_{(p)};p)\phat\ar[r]&TC(\bS\phat;p)\phat\\
TC(\tau_{\geq 0}L_{n}(\bS))\phat\ar[r]\ar[d]&TC(\tau_{\geq 0}L_{n}(\bS_{(p)}))\phat\ar[r]\ar[d]&TC(\tau_{\geq 0}L_{n}(\bS\phat))\phat\ar[d]\\
TC(\tau_{\geq 0}L_{n}(\bS);p)\phat\ar[r]&TC(\tau_{\geq 0}L_{n}(\bS_{(p)});p)\phat\ar[r]&TC(\tau_{\geq 0}L_{n}(\bS\phat);p)\phat\\
TC(L_{n}(\bS))\phat\ar[r]\ar[d]&TC(L_{n}(\bS_{(p)}))\phat\ar[r]\ar[d]&TC(L_{n}(\bS\phat))\phat\ar[d]\\
TC(L_{n}(\bS);p)\phat\ar[r]&TC(L_{n}(\bS_{(p)});p)\phat\ar[r]&TC(L_{n}(\bS\phat);p)\phat
}
\]
are weak equivalences.
\end{thm}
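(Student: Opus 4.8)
The plan is to deduce Theorem~\ref{thm:TCS} formally from Propositions~\ref{prop:classicp}, \ref{prop:NSp}, and~\ref{prop:pcompl}, together with a handful of elementary facts about chromatic localization and $p$-completion; no new input about $TC$ is required.

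I would begin with the vertical maps. In the classic setting, Proposition~\ref{prop:classicp} says that $TC(R)\phat\to TC(R;p)\phat$ is a weak equivalence for \emph{every} ring spectrum $R$, so all nine vertical maps are equivalences. In the Nikolaus--Scholze setting I would instead invoke Proposition~\ref{prop:NSp}, checking its hypothesis that $THH(R)$ be bounded below or $p$-local block by block: $THH(\bS)=\bS$ is connective; $THH(\tau_{\geq 0}L_{n}(-))$ is connective because the connective cover is a connective ring spectrum and $THH$ preserves connectivity; and $THH(L_{n}(-))$ is $p$-local because $L_{n}=L_{E(n)}$ is taken at the fixed prime $p$, so $L_{n}\bS$, $L_{n}\bS_{(p)}$, and $L_{n}(\bS\phat)$ are all $p$-local (the last even $p$-complete), hence so is $THH(R)$, which is built from derived smash powers of $R$.

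For the horizontal maps, the vertical equivalences just obtained reduce everything to the three $TC(-;p)\phat$ rows. By Proposition~\ref{prop:pcompl} the map $TC(R;p)\phat\to TC(R\phat;p)\phat$ is a weak equivalence for every ring spectrum $R$ in either setting, so by the two-out-of-three property it suffices to show that each underlying ring map $R\to R'\to R''$ in a given block becomes an equivalence after $p$-completion, i.e.\ is a $p$-equivalence. For the first block this is immediate: $\bS\to\bS_{(p)}$ and $\bS_{(p)}\to\bS\phat$ are $p$-equivalences by construction. For the third block, the leg $L_{n}\bS\to L_{n}\bS_{(p)}$ is already a weak equivalence, since $E(n)_{*}$-localization does not distinguish $\bS$ from $\bS_{(p)}$; and the leg $L_{n}\bS_{(p)}\to L_{n}(\bS\phat)$ has cofiber $\operatorname{cofib}(\bS_{(p)}\to\bS\phat)\wedge L_{n}\bS$ by the Hopkins--Ravenel smashing theorem. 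This cofiber is rational: since $\pi_{i}\bS$ is finite for $i>0$ and zero for $i<0$, the map $\pi_{i}\bS_{(p)}\to\pi_{i}\bS\phat$ is an isomorphism for all $i\neq 0$, so $\operatorname{cofib}(\bS_{(p)}\to\bS\phat)$ is the Eilenberg--MacLane spectrum on the $\bQ$-vector space $\bZ_{p}/\bZ_{(p)}$ and is in particular $p$-acyclic. For the second block the first leg is again a weak equivalence, and the second leg $\tau_{\geq 0}L_{n}\bS_{(p)}\to\tau_{\geq 0}L_{n}(\bS\phat)$ is the connective cover of the map just analyzed; a short diagram chase with the fiber sequences $\tau_{\geq 0}(-)\to(-)\to\tau_{\leq -1}(-)$ shows that a map with rational cofiber still has a rational, hence $p$-acyclic, cofiber after connective truncation.

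The step I expect to require the most care is this last one---the interaction of connective truncation with $p$-completion in the $\tau_{\geq 0}L_{n}$ block---since $\tau_{\geq 0}$ is neither exact nor compatible with $p$-completion in general. What makes it go through is precisely that $\operatorname{cofib}(\bS_{(p)}\to\bS\phat)$ is not merely $p$-acyclic but genuinely rational and concentrated in degree $0$, and that $L_{n}\bS$ is rationally $H\bQ$, so that after smashing and truncating the homotopy groups remain $\bQ$-vector spaces and the result is still $p$-acyclic. Everything else is bookkeeping: commutativity of the displayed diagram together with two-out-of-three, applied to the vertical equivalences and the horizontal $p$-equivalences, shows that every map in the diagram is a weak equivalence.
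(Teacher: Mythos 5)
Your overall strategy is the same as the paper's (the paper deduces Theorem~\ref{thm:TCS} directly from Propositions~\ref{prop:classicp}, \ref{prop:NSp}, and~\ref{prop:pcompl} without further comment), and your handling of the vertical maps and of the first and third horizontal blocks is correct: the only inputs are that $THH$ of a connective (resp.\ $p$-local) ring is connective (resp.\ $p$-local), that $\bS\to\bS_{(p)}\to\bS\phat$ are $p$-equivalences, that $L_n\bS\to L_n\bS_{(p)}$ is an equivalence, and that the cofiber of $L_n\bS_{(p)}\to L_n(\bS\phat)$ is $H(\bZ_p/\bZ_{(p)})\sma L_n\bS$, which is rational. (Minor slip: $L_n(\bS\phat)$ is not $p$-complete---its rationalization is $H\bQ_p$, and a nonzero rational spectrum is never $p$-complete---but only $p$-locality is used, so this is harmless.)

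The genuine gap is in the middle block, and it is exactly the step you flagged. The general principle you invoke---``a map with rational cofiber still has a rational, hence $p$-acyclic, cofiber after connective truncation''---is false, even under the extra hypotheses you cite (cofiber rational and concentrated in degree $0$, source rationally $H\bQ$ in degree $0$). For a counterexample, let $B$ be the cofiber of the map $\Sigma^{-1}H\bQ\to\Sigma^{-1}H(\bZ/p^{\infty})$ induced by the surjection $\bQ\to\bQ/\bZ_{(p)}\iso\bZ/p^{\infty}$, and let $A=\Sigma^{-1}H(\bZ/p^{\infty})$ (wedge both with $\bS_{(p)}$ if you want the source to be rationally $H\bQ$). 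Then $A\to B$ has cofiber $H\bQ$, but $\tau_{\geq 0}A\simeq *$, $\tau_{\geq 0}B\simeq H\bZ_{(p)}$, and the truncated cofiber $H\bZ_{(p)}$ is not $p$-acyclic. The issue in general is that the truncated cofiber has $\pi_0$ equal to the cokernel of $\pi_0 A\to\pi_0 B$, which is only the kernel of the boundary map $\pi_0(\text{cofiber})\to\pi_{-1}A$; a subgroup of a $\bQ$-vector space need not be $p$-divisible, and it fails to be exactly when that boundary hits ($p$-)torsion in $\pi_{-1}A$. Since $\pi_{-1}L_n\bS$ is a torsion group that is not understood in general, your diagram chase cannot close this off by itself. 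The statement you need is nevertheless true, but for a specific reason: smashing the cofiber sequence $\bS_{(p)}\to\bS\phat\to H(\bZ_p/\bZ_{(p)})$ with $L_n\bS$ and using that rational spectra are $E(n)$-local (so $L_n\bS\sma HW\simeq HW$ via the unit for any $\bQ$-vector space $W$), the fiber of $L_n\bS_{(p)}\to L_n(\bS\phat)$ is identified with $\Sigma^{-1}H(\bZ_p/\bZ_{(p)})$ mapping to $L_n\bS_{(p)}$ through the unit $\bS_{(p)}\to L_n\bS_{(p)}$; since $\pi_{-1}\bS_{(p)}=0$, this map is zero on homotopy groups. Hence $\pi_qL_n\bS_{(p)}\to\pi_qL_n(\bS\phat)$ is injective for all $q$, an isomorphism for $q\neq 0$, with cokernel $\bZ_p/\bZ_{(p)}$ for $q=0$, so the cofiber of $\tau_{\geq 0}L_n\bS_{(p)}\to\tau_{\geq 0}L_n(\bS\phat)$ is again $H(\bZ_p/\bZ_{(p)})$ and the truncated map is a $p$-equivalence. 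With that lemma supplied, your deduction of the theorem goes through as written.
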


\section{The Hopkins-Ravenel Chromatic Convergence Theorem and $TC(\bS_{(p)})$}
\label{sec:conn}

In this section, we apply the Hopkins-Ravenel Chromatic Convergence
Theorem~\cite[7.5.7]{Ravenel-Nilpotence} to prove
Theorem~\ref{main:conn}. Specifically, Hopkins and Ravenel prove that
the canonical map 
\[
\bS_{(p)}\to \holim_{n} L_{n}\bS
\]
is a weak equivalence; indeed (it is well-known that) the argument
shows that the map induces a pro-isomorphism on $\pi_{q}$ for each
$q$. The proof of Theorem~\ref{main:conn} is to propagate the
pro-isomorphism on homotopy groups through the $TC$ construction.  We
prove the following theorem.

\begin{thm}\label{thm:mainconn}
The map of pro-spectra $\{TC^{k}(\bS_{(p)};p)\}\to \{TC^{k}(\tau_{\geq
0}L_{n}\bS;p)\}$ induces a pro-isomorphism on $\pi_{q}$ pro-groups for all $q\in \bZ$.
\end{thm}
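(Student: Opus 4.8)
The plan is to propagate the Hopkins--Ravenel pro-isomorphism through the successive stages of the construction of $TC^{k}(-;p)$, exploiting the fact that every spectrum that appears is connective, so that the homotopy colimits involved --- the geometric realization that builds $THH$ and the homotopy orbit constructions --- interact well with pro-isomorphisms of homotopy pro-groups. The input is that, since $\bS_{(p)}$ is connective, the Hopkins--Ravenel argument~\cite[7.5.7]{Ravenel-Nilpotence} in fact shows that $\bS_{(p)}\to\{\tau_{\geq 0}L_{n}\bS\}$ is a $\pi_{*}$-pro-isomorphism: in nonnegative degrees this is the pro-isomorphism $\pi_{q}\bS_{(p)}\to\{\pi_{q}L_{n}\bS\}$, and in negative degrees both pro-systems vanish precisely because we have passed to connective covers --- which is the reason one works with $\tau_{\geq 0}L_{n}\bS$, exactly as in McClure--Staffeldt~\cite{McClureStaffeldt-Chromatic}.

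Next I would pass to $THH$. Using the Patchkoria--Sagave identification of $THH(R)$ as a Borel $\bT$-spectrum with the derived cyclic bar construction $N^{cy}(R)$, whose level $j$ is the $(j+1)$-fold derived smash power of $R$, the key point is that the iterated smash power sends a $\pi_{*}$-pro-isomorphism of connective ring spectra to a $\pi_{*}$-pro-isomorphism (a standard consequence of connectivity, via cell structures and Postnikov towers, together with the five lemma in the abelian category of pro-abelian groups); hence $N^{cy}(-)$ is a $\pi_{*}$-pro-isomorphism on each simplicial level, and on the geometric realization as well, because for connective $R$ the skeletal filtration of $|N^{cy}(R)|$ stabilizes in each homotopy degree, so the sequential colimit of the finite-colimit skeleta remains a $\pi_{*}$-pro-isomorphism. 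This produces a $\pi_{*}$-pro-isomorphism $THH(\bS_{(p)})\to\{THH(\tau_{\geq 0}L_{n}\bS)\}$ of connective Borel $\bT$-spectra which, by naturality, is compatible with the cyclotomic structure maps.

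I would then build up the genuine fixed points by induction on $k$, using the fundamental cofibration sequence
\[
THH(R)_{hC_{p^{k}}}\to THH(R)^{C_{p^{k}}}\overto{R}THH(R)^{C_{p^{k-1}}},
\]
whose third term arises as $(\rho^{*}_{p}THH(R)^{\Phi C_{p}})^{C_{p^{k-1}}}$ and is identified with $THH(R)^{C_{p^{k-1}}}$ by the cyclotomic structure map. The case $k=0$ is the previous paragraph. For the inductive step, the homotopy orbit term is treated just like the realization (a sequential colimit of finite colimits over a skeletal model of $EC_{p^{k}}$, stabilizing degreewise by connectivity), the third term is a $\pi_{*}$-pro-isomorphism by the inductive hypothesis, and the long exact sequence of the cofibration together with the five lemma for pro-abelian groups shows both that $THH(\bS_{(p)})^{C_{p^{k}}}\to\{THH(\tau_{\geq 0}L_{n}\bS)^{C_{p^{k}}}\}$ is a $\pi_{*}$-pro-isomorphism and that $THH(R)^{C_{p^{k}}}$ is again connective. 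Finally $TC^{k}(R;p)$ sits in a fiber sequence
\[
TC^{k}(R;p)\to THH(R)^{C_{p^{k}}}\overto{R-F}THH(R)^{C_{p^{k-1}}},
\]
and one last application of the five lemma to the resulting long exact sequence yields the pro-isomorphism on $\pi_{q}$ for every $q\in\bZ$, including negative $q$, where the contribution comes from $\pi_{q+1}THH(R)^{C_{p^{k-1}}}$. Passing to $\holim_{k}$, which is a derived limit and so depends only on the pro-isomorphism class, then recovers Theorem~\ref{main:conn}.

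The main obstacle will be carrying out these arguments honestly in the homotopy theory of pro-(genuine $\bT$-equivariant) spectra rather than merely with pro-systems of homotopy groups: the realization of the cyclic bar construction, the homotopy orbit functors, the genuine $C_{p^{k}}$-fixed point functors, and the cyclotomic structure map must all be manipulated within one model- or $\infty$-categorical framework for equivariant pro-spectra in which these operations are compatible with the relevant weak equivalences. This is exactly where the theory of Isaksen~\cite{Isaksen-ProSpectra} and Fausk--Isaksen~\cite{FauskIsaksen} replaces the Bousfield--Kan pro-space machinery that suffices in the $K$-theory case of McClure--Staffeldt. A secondary point is the bookkeeping between the two pro-directions: the $n$-direction arguments must be run uniformly enough in $k$ that the conclusion is genuinely an isomorphism of pro-spectra indexed over the product of the $k$- and $n$-directions, which is what is needed to commute $\holim_{k}$ past the pro-isomorphism.
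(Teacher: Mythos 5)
Your outline matches the paper's proof step for step: begin from the Hopkins--Ravenel pro-isomorphism $\pi_q\bS_{(p)}\to\{\pi_q L_n\bS\}$, propagate it through smash powers, the cyclic bar construction, homotopy orbits, the Hesselholt--Madsen fundamental cofiber sequence to reach the genuine $C_{p^k}$-fixed points inductively, and finally through the homotopy equalizer defining $TC^k(-;p)$, citing the same sources (McClure--Staffeldt, Patchkoria--Sagave, Isaksen, Fausk--Isaksen).

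One caution about how you justify the propagation, though, because as written it would not go through. You assert that ``the iterated smash power sends a $\pi_*$-pro-isomorphism of connective ring spectra to a $\pi_*$-pro-isomorphism \dots\ via cell structures and Postnikov towers, together with the five lemma in the abelian category of pro-abelian groups.'' A $\pi_*$-pro-isomorphism is \emph{not} a levelwise (or essentially levelwise) weak equivalence, so you cannot directly apply Postnikov/cell arguments level by level, and the five lemma in pro-abelian groups does not by itself produce an exact-sequence comparison for a smash power. The engine that makes everything work --- and the paper uses it at each stage --- is Isaksen's Theorem~8.4 (\cite[8.4]{Isaksen-ProSpectra}, quoted as Theorem~\ref{thm:isaksen}): for levelwise bounded-below pro-spectra, a $\pi_*$-pro-isomorphism is the same as an essentially levelwise $m$-equivalence for every $m$. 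This is what lets one choose, for each $m$, pro-isomorphic replacements $X^m\iso\{\bS_{(p)}\}$ and $Y^m\iso\{\tau_{\ge 0}L_n\bS\}$ with an honest levelwise $m$-equivalence $X^m\to Y^m$ of connective spectra (Corollary~\ref{cor:isaksen}), and only \emph{then} are smash powers, realizations, and homotopy orbits harmless, because $m$-equivalences of connective spectra are preserved by these operations. You do mention the Isaksen/Fausk--Isaksen theory at the end, but you frame it as a background homotopical-coherence concern, when in fact their bounded-below detection theorem is exactly the lemma you are missing to make the body of your argument correct. Said differently: it is not that some pro-categorical framework must exist in which the functors are well behaved; it is that the specific theorem trading $\pi_*$-pro-isomorphisms for essentially levelwise $m$-equivalences must be applied before and after each construction.

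A smaller point: for the homotopy orbits, you propose filtering $EC_{p^k}$ and arguing skeleton by skeleton, which works but is unnecessary; the paper simply runs the same Isaksen maneuver in the Borel $C_q$-equivariant category to get a levelwise $m$-equivalence of Borel $C_q$-spectra, which then remains a levelwise $m$-equivalence after $(-)_{hC_q}$ since homotopy orbits preserve $m$-equivalences of connective spectra.
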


Taking the homotopy limit
\begin{multline*}
TC(\bS_{(p)};p)=\holim_{k} TC^{k}(\bS_{(p)};p) \overto{\simeq} \holim_{k}\holim_{n} TC^{k}(\tau_{\geq 0}L_{n}\bS;p)\\
\overto{\simeq} \holim_{n}\holim_{k} TC^{k}(\tau_{\geq 0}L_{n}\bS;p)=
\holim_{n} TC(\tau_{\geq 0}L_{n}\bS;p),
\end{multline*}
we then get an $\bS_{(p)}$ version of
Theorem~\ref{main:conn}, which by Theorem~\ref{thm:TCS} is equivalent to
Theorem~\ref{main:conn}.  Because $\bS_{(p)}$ and $\tau_{\geq
0}L_{n}\bS$ are all connective spectra, both flavors of $TC$ are
equivalent, and we only need to prove the theorem for classic $TC$; we
discuss only this case.

The propagation argument uses results of
Isaksen~\cite{Isaksen-ProSpectra} and
Fausk-Isaksen~\cite{FauskIsaksen} on the homotopy theory of
pro-spectra. We emphasize that this
theory takes place in the point-set category of pro-objects in a
point-set category of spectra, in our case, the category of orthogonal
spectra indexed on $\{\bR^{n}\}$.  In particular,
Theorem~\ref{thm:mainconn} refers to a fixed (but unspecified)
point-set tower of functors $TC^{k}(-;p)$, point-set models $\tau_{\geq 0}L_{n}\bS$,
and system of point-set maps $\bS_{(p)}\to \tau_{\geq 0}L_{n}\bS$. As
clearly such a setup exists and the details play no role in
the arguments, we omit a discussion and precise specification.  On the
other hand, making certain specifications avoids some awkward
circumlocutions in statements, so we consider only models that satisfy
the following hypothesis.

\begin{conv}\label{conv:models}
The specific models used for the spectra $\bS_{(p)}$ and $\tau_{\geq
0}L_{n}\bS$ in this section are cofibrant in one of the standard (stable) model
structure on associative ring orthogonal spectra or 
commutative ring orthogonal spectra. 
\end{conv}

We recall that a property of a spectrum or map of spectra is said to
hold \term{levelwise} for a pro-spectrum or a level map of pro-spectra
if the constituent spectra or maps of spectra have that property.
More generally, the property is said to hold \term{essentially
levelwise} on a pro-spectrum or (general) map of pro-spectra if the
pro-spectrum or map is isomorphic in the pro-category to one where the
property holds levelwise.  In particular, a pro-spectrum is
\term{levelwise $m$-connected} if its constituent spectra are
$m$-connected and a level map of pro-spectra $X\to Y$ is a
\term{levelwise $m$-equivalence} if each map in the system is an
$m$-equivalence.  A map of pro-spectra $X\to Y$ is an
\term{essentially levelwise $m$-equivalence} if there exist
pro-isomorphisms $X'\iso X$ and $Y\iso Y'$ such that the composite
$X'\to Y'$ is represented by a level map that is a levelwise
$m$-equivalence.  The essentially levelwise $m$-equivalences plays a
central role in the theory of~\cite{Isaksen-ProSpectra,FauskIsaksen}.

\begin{defn}\label{defn:weakeq}
A map of pro-spectra is a \term{weak equivalence} (called
$\pi_{*}$-weak equivalence in \cite{Isaksen-ProSpectra} and
$\oH_{*}$-weak equivalence in \cite{FauskIsaksen}) if it is an
essentially levelwise $m$-equivalence for all $m\in \bZ$.  
\end{defn}

The main result on pro-spectra we need is the following adaptation
of~\cite[8.4]{Isaksen-ProSpectra} or \cite[9.13]{FauskIsaksen}.

\begin{thm}[Isaksen~{\cite[8.4]{Isaksen-ProSpectra}}]
\label{thm:isaksen}
Let $X=\{X_{s}\}$ and $Y=\{Y_{t}\}$ be levelwise $(-N)$-connected
pro-spectra for some $N\in \bZ$.  A map of pro-spectra $X\to Y$ is a weak
equivalence in the sense of Definition~\ref{defn:weakeq} if and only
if it induces on $\pi_{q}$ a pro-isomorphism of pro-groups for all $q>-N$.
\end{thm}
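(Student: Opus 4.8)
The plan is to follow the structure of Isaksen's proof of \cite[8.4]{Isaksen-ProSpectra}: reduce the statement to the assertion that a certain pro-spectrum (the levelwise homotopy fibre of $f$) with vanishing homotopy pro-groups is essentially levelwise $m$-connected for every $m$, and then prove that by a Postnikov-tower induction. The forward implication is formal. Suppose $f\colon X\to Y$ is a weak equivalence in the sense of Definition~\ref{defn:weakeq} and fix $q\in\bZ$. Taking $m=q+1$ in that definition yields pro-isomorphisms $X'\iso X$ and $Y\iso Y'$ such that the composite $X'\to X\to Y\to Y'$ is represented by a level map each of whose constituents is a $(q+1)$-equivalence of spectra, hence an isomorphism on $\pi_{q}$ at every level and therefore a pro-isomorphism on the $\pi_{q}$ pro-group. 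Since the pro-isomorphisms $X'\iso X$ and $Y\iso Y'$ also induce pro-isomorphisms on $\pi_{q}$, it follows that $\pi_{q}f$ is a pro-isomorphism, for every $q$ and \textit{a fortiori} for $q>-N$; this direction does not use the connectivity hypothesis.

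For the reverse implication, assume $\pi_{q}f$ is a pro-isomorphism for all $q>-N$; by levelwise $(-N)$-connectivity the same then holds for all $q\in\bZ$. Using the standard fact that every morphism of pro-objects is isomorphic to a level morphism, we may assume $f=\{f_{s}\colon X_{s}\to Y_{s}\}$ is a level map, and we pass to the levelwise homotopy fibre $F=\{\operatorname{hofib}(f_{s})\}$, which is levelwise $(-N-1)$-connected. Because pro-abelian groups form an abelian category in which levelwise exact sequences are exact, the long exact homotopy sequences of the levelwise fibre sequences $F_{s}\to X_{s}\to Y_{s}$ assemble into a long exact sequence of homotopy pro-groups; combined with the hypothesis that each $\pi_{q}f$ is a pro-isomorphism, this forces every homotopy pro-group of $F$ to vanish. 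Since a map $f_{s}$ of spectra is an $m$-equivalence exactly when $\operatorname{hofib}(f_{s})$ is $(m-1)$-connected, and the analogous statement holds essentially levelwise, the theorem reduces to the following: \emph{a levelwise bounded-below pro-spectrum all of whose homotopy pro-groups vanish is essentially levelwise $m$-connected for every $m$}, equivalently the map from it to $\ast$ is a weak equivalence.

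To prove that, fix $m$ and apply a functorial levelwise Postnikov section to $F$, obtaining a levelwise fibre sequence $F\langle m\rangle\to F\to\tau_{\le m}F$ with $F\langle m\rangle$ levelwise $m$-connected and $\tau_{\le m}F$ levelwise bounded, together with the levelwise fibre sequences $\Sigma^{k}H(\pi_{k}F)\to\tau_{\le k}F\to\tau_{\le k-1}F$ relating consecutive stages of its Postnikov tower. Each fibre $\Sigma^{k}H(\pi_{k}F)$ is essentially levelwise contractible: $\pi_{k}F$ is a pro-abelian group isomorphic to $0$, and the functor $H$ carries that isomorphism to one $H(\pi_{k}F)\iso\ast$, so $\Sigma^{k}H(\pi_{k}F)$ is pro-isomorphic to the trivial pro-spectrum. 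Inducting up the Postnikov tower, which is finite because $F$ is levelwise bounded below, and using closure of essentially levelwise equivalences under levelwise fibre sequences at each step, shows $\tau_{\le m}F$ is essentially levelwise contractible. Then the fibre inclusion $F\langle m\rangle\to F$ is an essentially levelwise equivalence, so $F$, being essentially levelwise equivalent to the levelwise $m$-connected $F\langle m\rangle$, is essentially levelwise $m$-connected. Letting $m$ vary completes the argument.

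The homotopy-theoretic input is thus elementary; the real work, and the main obstacle, is the pro-categorical bookkeeping. The word ``essentially'' permits a cofinal reindexing at each stage, and the argument uses finitely many of these (to replace $f$ by a level map, one per Postnikov stage, and one in the Eilenberg-MacLane base case), so one must know that essentially levelwise $m$-equivalences and essentially levelwise $m$-connected pro-spectra are closed under composition, under levelwise (co)fibre sequences, and under pro-isomorphism, and that the successive reindexings can be chosen compatibly. These are exactly the structural lemmas developed in \cite{Isaksen-ProSpectra} and \cite{FauskIsaksen}, which we invoke. Finally, the levelwise bounded-below hypothesis is indispensable, since it is precisely what makes the Postnikov induction terminate.
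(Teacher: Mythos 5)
First, note that the paper does not actually prove this statement: it is quoted as an external result (Isaksen~\cite[8.4]{Isaksen-ProSpectra}, or \cite[9.13]{FauskIsaksen}), with only the remark that the proof also yields $(-N)$-connected reindexed models. So there is no in-paper argument to match; your proposal has to stand on its own as a reconstruction of Isaksen's proof. Your forward direction is fine, and so is the reduction at the level of pro-groups (level representation of the map, levelwise homotopy fibres, exactness of levelwise-exact sequences of pro-abelian groups, hence pro-triviality of $\pi_{q}F$ for all $q$).

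The gap is in the two places where ``essentially levelwise'' has to be manipulated, and these are exactly where the content of the theorem lives, not routine bookkeeping. (a) From ``$F$ is essentially levelwise $(m-1)$-connected'' you cannot formally conclude ``$f$ is an essentially levelwise $m$-equivalence'': the pro-isomorphism witnessing the connectivity of $F$ need not come from a reindexing of the level map $f$, so the fibrewise criterion does not pass through ``essentially'' without a reindexing lemma for the whole fibre-sequence diagram. (b) More seriously, your Postnikov induction rests on the claim that essentially levelwise contractible pro-spectra are closed under levelwise fibre sequences. The naive argument fails: if the transition $\tau_{\le k-1}F_{t}\to\tau_{\le k-1}F_{s}$ is null, the transition on $\tau_{\le k}F$ factors up to homotopy through the Eilenberg--MacLane fibre over index $s$, but that factorization is not compatible with the transition maps of the fibre pro-spectrum, so the pro-triviality of $\pi_{k}F$ cannot be applied to it directly; and this closure statement is not an off-the-shelf structural lemma in \cite{Isaksen-ProSpectra} or \cite{FauskIsaksen} that you can simply invoke --- it is essentially equivalent to the vanishing statement you are trying to prove. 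The actual argument proceeds differently: one inducts on connective covers, using that $\pi_{k}$ is pro-zero to choose, for each $s$, an index $t$ with $\pi_{k}F_{t}\to\pi_{k}F_{s}$ zero, showing (via $H^{k}(V;A)\iso\operatorname{Hom}(\pi_{k}V,A)$ for $(k-1)$-connected $V$) that the transition map then factors through the $k$-connected cover of $F_{s}$, and then reindexing the whole system so that these factorizations assemble into a pro-isomorphic, levelwise more highly connected pro-spectrum; bounded-below-ness makes this terminate, and it is this version of the induction that also yields the refinement the paper uses (that the reindexed models stay $(-N)$-connected). As written, your proof defers precisely this step to an unproved closure lemma, so it is incomplete.
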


Although not stated explicitly, the proof of the theorem gives slightly
more information: the pro-isomorphic spectra $X'\iso X$ and $Y\iso Y'$
for which the composite map $X'\to Y'$ is represented by a levelwise
$m$-equivalence can be chosen so that the constituent spectra are also
$(-N)$-connected.  We get the following immediate corollary.

\begin{cor}\label{cor:isaksen}
For each $m\geq 0$, there exists levelwise connective pro-spectra
$X^{m}=\{X^{m}_{t}\}$ and $Y^{m}=\{Y^{m}_{t}\}$, pro-isomorphisms
$X^{m}\iso \{\bS_{(p)}\}$ and $Y^{m}\iso \{\tau_{\geq 0}L_{n}\bS\}$,
and a levelwise $m$-equivalence $X^{m}\to Y^{m}$ such that the diagram
\[
\xymatrix@-1pc{%
X^{m}\ar[r]\ar[d]_{\iso}&Y^{m}\ar[d]^{\iso}\\
\{\bS_{(p)}\}\ar[r]&\{\tau_{\geq 0}L_{n}\bS\}
}
\]
commutes.
\end{cor}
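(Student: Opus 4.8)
The final statement to prove is Corollary~\ref{cor:isaksen}, which extracts the refined conclusion (levelwise connective choices of $X',Y'$) from Theorem~\ref{thm:isaksen} applied to the pro-spectra $\{\bS_{(p)}\}$ and $\{\tau_{\geq 0}L_n\bS\}$.

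\textbf{Proof proposal for Corollary~\ref{cor:isaksen}.}

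The plan is to apply Theorem~\ref{thm:isaksen} together with the parenthetical strengthening of it noted in the paragraph immediately preceding the corollary, to the specific map of pro-spectra $\{\bS_{(p)}\}\to\{\tau_{\geq 0}L_n\bS\}$. First I would check the hypotheses of Theorem~\ref{thm:isaksen}: the constant pro-spectrum $\{\bS_{(p)}\}$ is levelwise $(-1)$-connected (indeed connective), and $\{\tau_{\geq 0}L_n\bS\}$ is levelwise connective by construction of the connective cover, so both are levelwise $(-N)$-connected with $N=0$ (or any $N\leq 0$). It remains to verify that the map induces a pro-isomorphism on $\pi_q$ for all $q>-N$, i.e.\ for all $q\geq 0$; but this is a direct consequence of the Hopkins--Ravenel Chromatic Convergence Theorem as recalled at the start of Section~\ref{sec:conn}: the canonical map $\bS_{(p)}\to\holim_n L_n\bS$ induces a pro-isomorphism on each $\pi_q$, and since the connective cover map $\tau_{\geq 0}L_n\bS\to L_n\bS$ is an isomorphism on $\pi_q$ for $q\geq 0$, the map $\{\bS_{(p)}\}\to\{\tau_{\geq 0}L_n\bS\}$ of pro-spectra is a pro-isomorphism on $\pi_q$ for every $q\geq 0$. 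Hence by Theorem~\ref{thm:isaksen} it is a weak equivalence of pro-spectra in the sense of Definition~\ref{defn:weakeq}.

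Next, fix $m\geq 0$. Since the map is a weak equivalence, it is in particular an essentially levelwise $m$-equivalence, so by definition there are pro-isomorphisms $X^m\iso\{\bS_{(p)}\}$ and $\{\tau_{\geq 0}L_n\bS\}\iso Y^m$ and a levelwise $m$-equivalence $X^m\to Y^m$ filling in the commuting square. The only remaining point is to arrange that $X^m$ and $Y^m$ may be taken to be levelwise connective; this is exactly the additional information extracted from the proof of \cite[8.4]{Isaksen-ProSpectra}, as flagged in the paragraph before the corollary, applied with $N=0$ so that ``$(-N)$-connected'' reads ``$(-1)$-connected'', i.e.\ connective. I would cite that refinement directly rather than reprove it; for completeness one could note that if Isaksen's construction produces $(-1)$-connected representatives, then since connectivity is preserved under the reindexing used, the $X^m_t$ and $Y^m_t$ are connective as claimed.

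I do not expect a genuine obstacle here: the statement is a bookkeeping corollary whose two inputs -- Theorem~\ref{thm:isaksen} with its noted strengthening, and the pro-isomorphism on homotopy groups coming from Hopkins--Ravenel -- are both available. The one point requiring a sentence of care is the passage from ``pro-isomorphism on $\pi_q$ for all $q$'' (the usual phrasing of chromatic convergence for the tower $\{L_n\bS\}$) to the same statement for the \emph{connective} tower $\{\tau_{\geq 0}L_n\bS\}$ and only in the range $q\geq 0$; this is immediate from the defining property of connective covers, but it is worth spelling out so that the hypothesis ``$q>-N$'' of Theorem~\ref{thm:isaksen} is visibly met with $N=0$. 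Everything else is a direct invocation of the cited results.
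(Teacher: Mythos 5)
Your proposal is correct and takes essentially the same route as the paper's (implicit) proof: the Hopkins--Ravenel pro-isomorphism on $\pi_q$, unchanged in degrees $q\geq 0$ by passing to connective covers, feeds into Theorem~\ref{thm:isaksen} together with the noted strengthening of its proof to produce the levelwise connective $X^m$, $Y^m$. One indexing slip to fix: since ``connective'' means $(-1)$-connected, the theorem should be invoked with $N=1$ rather than $N=0$, so that ``$(-N)$-connected'' reads ``$(-1)$-connected'' and the hypothesis ``pro-isomorphism on $\pi_q$ for $q>-N$'' is exactly the range $q\geq 0$ you verify.
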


We now begin to apply this to get comparison results for the map
$\{\bS_{(p)}\}\to \{\tau_{\geq 0}L_{n}\bS\}$.  In the following
proposition, $(-)^{(k)}$ denotes $k$-fold smash power.

\begin{prop}
The map of pro-spectra $\bS_{(p)}\to \{n\mapsto (\tau_{\geq 0}L_{n}\bS)^{(k)}\}$
is a weak equivalence for each $k\geq 1$.
\end{prop}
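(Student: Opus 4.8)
The plan is to bootstrap from the $k=1$ case (which is exactly the Hopkins–Ravenel statement $\{\bS_{(p)}\}\to\{\tau_{\ge 0}L_n\bS\}$ being a weak equivalence, reformulated via Theorem~\ref{thm:isaksen}) to general $k$ by an induction on the number of smash factors, using the compatibility between smash products and the essentially-levelwise structure from Corollary~\ref{cor:isaksen}. First I would fix $m\ge 0$ and invoke Corollary~\ref{cor:isaksen} to replace $\{\bS_{(p)}\}\to\{\tau_{\ge 0}L_n\bS\}$ by a levelwise $m$-equivalence $X^m\to Y^m$ of levelwise connective pro-spectra. Smashing $k$ copies of this map together gives a level map $(X^m)^{(k)}\to (Y^m)^{(k)}$ of levelwise connective pro-spectra; since $X^m$ and $Y^m$ are levelwise connective and the map is a levelwise $m$-equivalence, the standard connectivity estimate for smash products (an $m$-equivalence between connective spectra smashed with a connective spectrum is again an $m$-equivalence, iterated $k$ times) shows that $(X^m)^{(k)}\to (Y^m)^{(k)}$ is again a levelwise $m$-equivalence. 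Because smash powers preserve pro-isomorphisms, $(X^m)^{(k)}\iso \{\bS_{(p)}^{(k)}\}=\{\bS_{(p)}\}$ and $(Y^m)^{(k)}\iso\{n\mapsto(\tau_{\ge 0}L_n\bS)^{(k)}\}$, so the original map $\{\bS_{(p)}\}\to\{n\mapsto(\tau_{\ge 0}L_n\bS)^{(k)}\}$ is an essentially levelwise $m$-equivalence.

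Letting $m$ range over all nonnegative integers then shows the map is an essentially levelwise $m$-equivalence for every $m$ in this range; for negative $m$ it is automatic since all the spectra in sight are levelwise connective (a levelwise connective pro-spectrum maps by an essentially levelwise $m$-equivalence to anything levelwise connective when $m<0$, as every map of $(-1)$-connected spectra is a $(-1)$-equivalence, hence an $m$-equivalence for $m\le -1$). Thus the map is an essentially levelwise $m$-equivalence for all $m\in\bZ$, i.e.\ a weak equivalence in the sense of Definition~\ref{defn:weakeq}, which is what we want. Alternatively, one can phrase the conclusion through Theorem~\ref{thm:isaksen}: the source and target are levelwise connective, so it suffices to check that $\pi_q$ of the map is a pro-isomorphism for all $q\ge 0$, and the connectivity argument above gives exactly this.

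The main point to be careful about—really the only non-formal ingredient—is the connectivity behavior of the smash power at the level of the \emph{point-set} category of orthogonal spectra, so that the conclusion is genuinely about the derived smash power and the pro-isomorphisms of Corollary~\ref{cor:isaksen} are respected. This is where Convention~\ref{conv:models} enters: because our models for $\bS_{(p)}$ and $\tau_{\ge 0}L_n\bS$ are cofibrant (commutative or associative) ring orthogonal spectra, the point-set smash powers compute the derived smash powers, and cofibrancy guarantees that smashing with them preserves weak equivalences and the relevant connectivity estimates (no extra cofibrant replacement is needed, and the pro-isomorphisms of $X^m$, $Y^m$ with the standard models, which may be taken among cofibrant objects by the strengthened form of Theorem~\ref{thm:isaksen} noted after it, smash to pro-isomorphisms). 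Once this point-set bookkeeping is in place, the argument is a direct induction on $k$ with no further obstacles.
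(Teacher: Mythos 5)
Your outline is the same as the paper's (replace the map by the levelwise $m$-equivalence $X^{m}\to Y^{m}$ of Corollary~\ref{cor:isaksen}, smash $k$ times, use the connectivity estimate for smash products of connective spectra, and finish with Theorem~\ref{thm:isaksen}), but the step you yourself flag as the only non-formal one is not justified, and the justification you offer misreads the paper. To run your argument you need the \emph{point-set} smash power $(X^{m}_{t})^{(k)}\to (Y^{m}_{t})^{(k)}$ to be an $m$-equivalence, and hence you need homotopical control (cofibrancy/flatness) of the spectra $X^{m}_{t}$, $Y^{m}_{t}$. Convention~\ref{conv:models} gives no such control: it constrains only the chosen models of $\bS_{(p)}$ and $\tau_{\geq 0}L_{n}\bS$, not the replacement pro-spectra produced by Isaksen's theorem. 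The remark following Theorem~\ref{thm:isaksen} strengthens only the \emph{connectivity} of the replacements; there is no ``strengthened form'' providing cofibrant $X^{m}$, $Y^{m}$. Nor can you repair this by a levelwise cofibrant replacement: your argument needs genuine pro-isomorphisms (isomorphisms in the pro-category) from $(X^{m})^{(k)}$ and $(Y^{m})^{(k)}$ back to $\{\bS_{(p)}^{(k)}\}$ and $\{(\tau_{\geq 0}L_{n}\bS)^{(k)}\}$, and a levelwise weak equivalence $QX^{m}\to X^{m}$ is not a pro-isomorphism, so the replacement would destroy exactly the structure you are trying to transport.

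The way around this--and it is what the paper does--is to stop insisting on a point-set essentially levelwise $m$-equivalence after smashing. Apply the \emph{derived} smash power $(-)^{\dL(k)}$ levelwise to $X^{m}\to Y^{m}$; the connectivity estimate is then unconditional, and the resulting level map of pro-objects in the stable category sits in a commutative square over $\{\bS_{(p)}^{(k)}\}\to\{(\tau_{\geq 0}L_{n}\bS)^{(k)}\}$, where the vertical maps are pro-isomorphisms of pro-objects in the stable category (this is where Convention~\ref{conv:models} is actually used: it guarantees the point-set smash powers of the models compute the derived ones). This yields only a pro-isomorphism on $\pi_{q}$ for $q<m$, not an essentially levelwise equivalence--but since $m$ is arbitrary and source and target are levelwise connective, the homotopy-group criterion of Theorem~\ref{thm:isaksen} then gives the weak equivalence of pro-spectra. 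If you reroute your argument through the stable category in this way (essentially your closing ``alternative phrasing,'' but with the derived smash power), the gap disappears.
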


\begin{proof}
Let $X^{m}$, $Y^{m}$ be as in Corollary~\ref{cor:isaksen}.  Writing
$(-)^{\dL(k)}$ for the derived smash power (in the stable category),
for each $t$ in the common indexing category for $X^{m},Y^{m}$, the map 
\[
(X^{m}_{t})^{\dL(k)}\to (Y^{m}_{t})^{\dL(k)}
\]
is an $m$-equivalence.  These maps assemble into a level map of pro-objects
in the stable category that fits into the following commutative diagram of maps of
pro-objects in the stable category. 
\[
\xymatrix@-1pc{%
(X^{m})^{\dL(k)}\ar[d]\ar[r]&(Y^{m})^{\dL(k)}\ar[d]\\
\{\bS_{(p)}^{(k)}\}\ar[r]&\{(\tau_{\geq 0}L_{n}\bS)^{(k)}\}
}
\]
The vertical maps are pro-isomorphisms of pro-objects in the stable
category and in particular induce pro-isomorphisms on homotopy
groups.  The top map is a levelwise isomorphism on $\pi_{q}$ for
$q<m$, and so the bottom map is a pro-isomorphism on $\pi_{q}$ for
$q<m$.  Since $m$ was arbitrary, the statement follows from
Theorem~\ref{thm:isaksen}. 
\end{proof}

In the following proposition, we use $N^{cy}$ to denote the cyclic bar
construction for ring spectra.

\begin{prop}\label{prop:Ncy}
The map of pro-spectra $\{N^{cy}(\bS_{(p)})\}\to \{N^{cy}(\tau_{\geq
0}L_{n}\bS)\}$ is a weak equivalence.
\end{prop}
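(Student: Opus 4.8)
The plan is to reduce the statement about the cyclic bar construction to the previous proposition about smash powers, using the fact that $N^{cy}(R)$ is built out of the smash powers $R^{(k)}$ via a simplicial object in which all the face and degeneracy maps are built from the multiplication and unit of $R$. First I would recall the simplicial (in fact cyclic) structure: the $k$-simplices of $N^{cy}(R)$ are $R^{(k+1)}$, and $N^{cy}(R)$ is the realization of this simplicial spectrum; since our models for $\bS_{(p)}$ and $\tau_{\geq 0}L_n\bS$ are cofibrant ring spectra by Convention~\ref{conv:models}, the smash powers compute the derived smash powers, and the realization computes the homotopy colimit, so there is no derived-functor bookkeeping to worry about at the level of individual spectra in the tower.

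Next I would run the Isaksen machinery exactly as in the previous proposition. Fix $m\geq 0$ and take $X^{m}\iso \{\bS_{(p)}\}$ and $Y^{m}\iso \{\tau_{\geq 0}L_n\bS\}$ as in Corollary~\ref{cor:isaksen}, with a levelwise $m$-equivalence $X^{m}\to Y^{m}$ through levelwise connective pro-spectra. Applying the (derived) cyclic bar construction levelwise to this level map yields a level map of pro-objects $N^{cy}(X^{m})\to N^{cy}(Y^{m})$ fitting into a commuting square with $\{N^{cy}(\bS_{(p)})\}\to \{N^{cy}(\tau_{\geq 0}L_n\bS)\}$, the vertical maps being pro-isomorphisms (because $N^{cy}$ of a levelwise pro-isomorphism of levelwise cofibrant ring spectra is a pro-isomorphism). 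The point is then that for each $t$, the map $N^{cy}(X^{m}_{t})\to N^{cy}(Y^{m}_{t})$ is an $m$-equivalence: in each simplicial degree $k$ it is the map $(X^{m}_{t})^{\dL(k+1)}\to (Y^{m}_{t})^{\dL(k+1)}$, which is an $m$-equivalence since a smash power of an $m$-equivalence between connective spectra is an $m$-equivalence, and an $m$-equivalence of simplicial spectra in every degree realizes to an $m$-equivalence (geometric realization preserves connectivity and connectivity of maps for levelwise connective objects — here one uses that the $X^{m}_{t}$, $Y^{m}_{t}$ are connective so their smash powers are connective). Hence the top map is a levelwise $m$-equivalence, so the bottom map is a pro-isomorphism on $\pi_q$ for $q<m$; since $m$ was arbitrary, Theorem~\ref{thm:isaksen} finishes it.

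The main obstacle I anticipate is purely a point-set/coherence matter rather than a conceptual one: making sure that ``apply $N^{cy}$ levelwise'' produces an honest level map of pro-objects in the \emph{point-set} category and that the pro-isomorphisms $X^{m}\iso \{\bS_{(p)}\}$ etc.\ — which a priori only live in the pro-homotopy-category — can be arranged (or lifted) to point-set level maps of ring spectra so that $N^{cy}$ can be applied and still induces pro-isomorphisms. This is handled by the standard device of replacing the abstract pro-isomorphisms by zig-zags of level maps through cofibrant ring spectra, invoking the refinement of Theorem~\ref{thm:isaksen} noted after its statement (the pro-isomorphic replacements can be taken levelwise connective), exactly as was done implicitly in the preceding proposition; the connectivity control needed for realization is automatic because everything in sight is levelwise connective.
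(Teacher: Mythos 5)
There is a genuine gap, and it is not merely a point-set coherence issue as you suggest at the end. Your plan applies $N^{cy}$ levelwise to the Isaksen replacements $X^{m}\to Y^{m}$ of Corollary~\ref{cor:isaksen}, but those replacements are pro-objects in \emph{spectra}, not in ring spectra: the constituent spectra $X^{m}_{t}$, $Y^{m}_{t}$ carry no multiplication, so the simplicial object underlying $N^{cy}(X^{m}_{t})$ (whose face maps are built from the ring multiplication) simply does not exist, and the step ``in each simplicial degree $k$ it is the map $(X^{m}_{t})^{\dL(k+1)}\to (Y^{m}_{t})^{\dL(k+1)}$'' has nothing to be a degree of. The device you invoke to repair this --- replacing the abstract pro-isomorphisms by zig-zags of level maps through cofibrant \emph{ring} spectra and citing the refinement noted after Theorem~\ref{thm:isaksen} --- is not available: that refinement only says the replacement spectra may be taken $(-N)$-connected; Isaksen's theory gives no control whatsoever over multiplicative structure, and the pro-isomorphisms it produces involve reindexings and maps that have no reason to lift to maps of ring spectra. (Relatedly, your parenthetical ``$N^{cy}$ of a levelwise pro-isomorphism of levelwise cofibrant ring spectra is a pro-isomorphism'' conflates pro-isomorphisms with level maps.) This is exactly why the smash-power proposition preceding Proposition~\ref{prop:Ncy} is stated for the \emph{derived} smash powers of the honest towers: derived smash powers make sense for bare spectra, whereas $N^{cy}$ does not.

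The fix, which is the paper's route, is to never apply $N^{cy}$ to replacements at all. Instead one filters $N^{cy}$ of the actual cofibrant rings (Convention~\ref{conv:models} lets you pass to the thickened realization $N^{cy}_{\Delta}(R)$, filtered by Hurewicz cofibrations with $m$th filtration quotient $R^{(m+1)}\sma\Delta[m]/\partial$). Connectivity of the smash powers of $\bS_{(p)}$ and $\tau_{\geq 0}L_{n}\bS$ makes the inclusion of the $m$-skeleton an $m$-equivalence, so it suffices to compare skeleta; these are handled by induction on $m$ using the already-established pro-weak equivalence on smash powers (applied to the honest pro-spectra $\{(\tau_{\geq 0}L_{n}\bS)^{(k)}\}$), the cofiber sequences of the filtration, and the five lemma in pro-abelian groups. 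Your observation that a degreewise $m$-equivalence of levelwise connective simplicial spectra realizes to an $m$-equivalence is correct in itself, but the only objects to which you can legitimately apply it here are the realizations of the honest rings, and making that comparison go through in the pro-category is precisely what the skeletal filtration argument accomplishes.
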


\begin{proof}
The cyclic bar construction $N^{cy}(R)$ is the geometric realization of a
simplicial spectrum with $q$th simplicial level the $(q+1)$th smash
power $R^{(q+1)}$.  The hypotheses on $\bS_{(p)}$ and $\tau_{\geq
0}L_{n}\bS$ (Convention~\ref{conv:models}) imply that the geometric
realization is equivalent to the thickened realization obtained by
gluing just using the face maps.  The thickened realization $N^{cy}_{\Delta}(R)$ has a
filtration by Hurewicz cofibrations 
\[
*=N^{cy}_{\Delta}(R)_{-1}\to N^{cy}_{\Delta}(R)_{0}\to N^{cy}_{\Delta}(R)_{1}\to N^{cy}_{\Delta}(R)_{2} \to \dotsb
\]
with $N^{cy}_{\Delta}(R)=\colim N^{cy}_{\Delta}(R)_{m}$ and $m$th
filtration quotient $R^{(m+1)}\sma \Delta[n]/\partial$.  
Because the spectra $\bS_{(p)}^{(k)}$ and $(\tau_{\geq
0}L_{n}\bS)^{(k)}$ are connective for all $k$, the inclusion of the 
$m$th filtration piece $N^{cy}_{\Delta}(R)_{m}\to N^{cy}_{\Delta}(R)$
is therefore an $m$-equivalence for these ring spectra.  In
particular, to prove the proposition, it suffices to prove that for
each $m$, the map of pro-spectra
\[
\{N^{cy}_{\Delta}(\bS_{(p)})_{m}\}\to
\{N^{cy}_{\Delta}(\tau_{\geq 0}L_{n}\bS)_{m}\}
\]
induces a pro-isomorphism on each $\pi_{q}$ pro-group.  This
certainly holds for $m=-1$ where both sides are trivial and holds by
induction for all $m$ by the previous proposition.
\end{proof}

The cyclic bar construction $N^{cy}(R)$ comes with a natural action of
the Lie group $\bT$.  We can then form the homotopy orbit
spectrum $N^{cy}(R)_{hC_{q}}=(N^{cy}(R)\sma EC_{q+})/C_{q}$.  By
naturality, we get maps of spectra $N^{cy}(\bS_{(p)})_{hC_{q}}\to
N^{cy}(\tau_{\geq 0}L_{n}\bS)_{hC_{q}}$ and a map of pro-spectra
\begin{equation}\label{eq:horbits}
\{N^{cy}(\bS_{(p)})_{hC_{q}}\}\to 
\{N^{cy}(\tau_{\geq 0}L_{n}\bS)_{hC_{q}}\}.
\end{equation}

\begin{prop}\label{prop:horbits}
For each $q\in \bN$, the map of pro-spectra~\eqref{eq:horbits} is a
weak equivalence. 
\end{prop}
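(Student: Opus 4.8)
The plan is to reduce the proposition, by $q$-fold edgewise subdivision, to a homotopy-orbit version of the proposition on smash powers proved above, and then to rerun that comparison argument using Corollary~\ref{cor:isaksen}. First I would pass to the subdivided model of the cyclic bar construction. Write $\mathrm{sd}_{q}N^{cy}_{\bullet}(R)$ for the $q$-fold edgewise subdivision of the cyclic bar construction: a simplicial spectrum with $d$th level $R^{(q(d+1))}$ on which $C_{q}$ acts levelwise by cyclically permuting the $q$ blocks of $d+1$ consecutive smash factors. The standard properties of edgewise subdivision give a $C_{q}$-equivariant equivalence between its thick realization and $N^{cy}(R)$ with the restriction to $C_{q}$ of its $\bT$-action (here Convention~\ref{conv:models} is used to identify thick and thin realizations, as in the proof of Proposition~\ref{prop:Ncy}). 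Since $(-)_{hC_{q}}$ commutes with realizations and preserves cofiber sequences, $N^{cy}(R)_{hC_{q}}$ acquires a filtration by Hurewicz cofibrations $\ast=\mathrm{sk}_{-1}\to\mathrm{sk}_{0}\to\mathrm{sk}_{1}\to\dotsb$ with $N^{cy}(R)_{hC_{q}}=\colim_{d}\mathrm{sk}_{d}$ and $d$th filtration quotient equivalent to $\Sigma^{d}(R^{(q(d+1))})_{hC_{q}}$, where $C_{q}$ acts on $R^{(q(d+1))}$ by the block permutation above.

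Since the smash powers $R^{(q(d+1))}$ are connective for $R=\bS_{(p)}$ and $R=\tau_{\geq 0}L_{n}\bS$, so are their homotopy orbits, and hence $\mathrm{sk}_{d}\to N^{cy}(R)_{hC_{q}}$ is a $d$-equivalence for these ring spectra. Arguing as in the proof of Proposition~\ref{prop:Ncy}, it therefore suffices to show that for each $d$ the map $\{\mathrm{sk}_{d}(N^{cy}(\bS_{(p)})_{hC_{q}})\}\to\{\mathrm{sk}_{d}(N^{cy}(\tau_{\geq 0}L_{n}\bS)_{hC_{q}})\}$ induces a pro-isomorphism on every $\pi_{j}$ pro-group. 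This is clear for $d=-1$, and the filtration cofiber sequences --- together with the five lemma in the abelian category of pro-abelian groups --- reduce the inductive step to showing that, after desuspension, with $C_{q}$ permuting the $q$ blocks,
\[
\{(\bS_{(p)}^{(q(d+1))})_{hC_{q}}\}\to\{((\tau_{\geq 0}L_{n}\bS)^{(q(d+1))})_{hC_{q}}\}
\]
induces a pro-isomorphism on every $\pi_{j}$.

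For this last point I would repeat the argument used to prove Proposition~\ref{prop:Ncy}, now applying the functor $R\mapsto (R^{\dL(q(d+1))})_{hC_{q}}$, where the derived $q(d+1)$-fold smash power carries the action of $C_{q}$ permuting the $q$ blocks of $d+1$ consecutive factors and $(-)_{hC_{q}}$ denotes derived $C_{q}$-homotopy orbits. Let $X^{m}$, $Y^{m}$ be as in Corollary~\ref{cor:isaksen}, so that for each index $t$ the map $X^{m}_{t}\to Y^{m}_{t}$ is an $m$-equivalence of connective spectra. Smashing with connective spectra preserves $m$-equivalences, so $(X^{m}_{t})^{\dL(q(d+1))}\to(Y^{m}_{t})^{\dL(q(d+1))}$ is a $C_{q}$-equivariant $m$-equivalence; and since $(-)_{hC_{q}}$ takes $m$-connected spectra to $m$-connected spectra it preserves $m$-equivalences, so $((X^{m}_{t})^{\dL(q(d+1))})_{hC_{q}}\to((Y^{m}_{t})^{\dL(q(d+1))})_{hC_{q}}$ is an $m$-equivalence. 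These assemble into a level map of pro-objects in the stable category fitting into a commuting square over the displayed map, whose vertical maps are pro-isomorphisms of pro-objects in the stable category (the functor preserves the pro-isomorphisms of Corollary~\ref{cor:isaksen}; by Convention~\ref{conv:models}, the point-set smash powers of $\bS_{(p)}$ and of $\tau_{\geq 0}L_{n}\bS$ with their block-permutation actions compute the derived ones). Since the top map is a levelwise isomorphism on $\pi_{j}$ for $j<m$, the bottom map is a pro-isomorphism on $\pi_{j}$ for $j<m$; letting $m\to\infty$ gives the claim. Feeding this through the induction over $d$ and then invoking Theorem~\ref{thm:isaksen} --- both pro-spectra in~\eqref{eq:horbits} being levelwise connective --- shows that~\eqref{eq:horbits} is a weak equivalence of pro-spectra.

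The one genuinely new point relative to Proposition~\ref{prop:Ncy}, and the main obstacle, is the $C_{q}$-equivariance: one cannot simply apply $(-)_{hC_{q}}$ to the weak equivalence of Proposition~\ref{prop:Ncy}, because that equivalence lives in the non-equivariant pro-category and is witnessed by re-indexings that carry no $C_{q}$-action, and the filtration of $N^{cy}_{\Delta}(R)$ used there is not $C_{q}$-equivariant. Edgewise subdivision is exactly the device that replaces that filtration by a levelwise $C_{q}$-equivariant one whose subquotients are smash powers with block-permutation action; granting the standard facts about edgewise subdivision and thick realizations, the rest is a routine elaboration of the proofs above.
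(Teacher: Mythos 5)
Your argument is essentially correct, but it takes a genuinely different route from the paper. You correctly identify the one real issue --- that the weak equivalence of Proposition~\ref{prop:Ncy} is witnessed by reindexings carrying no $C_{q}$-action, so one cannot just apply $(-)_{hC_{q}}$ --- but the paper resolves it differently: it observes that Theorem~\ref{thm:isaksen} holds verbatim in the Borel $C_{q}$-equivariant context (using the model structure on orthogonal spectra with $C_{q}$-action whose equivalences are detected on underlying spectra, i.e.\ the $\aF$-model structure with $\aF$ the trivial family, together with the Postnikov $t$-structure in the sense of Fausk--Isaksen). Since the map $\{N^{cy}(\bS_{(p)})\}\to\{N^{cy}(\tau_{\geq 0}L_{n}\bS)\}$ is a map of pro-$C_{q}$-spectra and Borel equivalences are underlying equivalences, Proposition~\ref{prop:Ncy} then furnishes, for each $m$, \emph{equivariant} reindexings $X\iso N^{cy}(\bS_{(p)})$, $N^{cy}(\tau_{\geq 0}L_{n}\bS)\iso Y$ with $X\to Y$ a levelwise $m$-equivalence, and applying $(-)_{hC_{q}}$ levelwise (which preserves $m$-equivalences) finishes the proof in a few lines. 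Your alternative avoids any equivariant pro-spectrum machinery: you replace the non-equivariant skeletal filtration by the one coming from $q$-fold edgewise subdivision, whose simplicial $C_{q}$-action makes the filtration equivariant with subquotients $\Sigma^{d}R^{(q(d+1))}$ carrying the block-permutation action, and then you rerun the Proposition~\ref{prop:Ncy} induction after homotopy orbits, using only the non-equivariant Corollary~\ref{cor:isaksen} together with the facts that smash powers and $(-)_{hC_{q}}$ preserve $m$-equivalences of connective spectra. The trade-off: the paper's argument is shorter and reuses Proposition~\ref{prop:Ncy} as a black box, at the price of invoking the equivariant form of the Isaksen/Fausk--Isaksen theory; yours needs only the non-equivariant theory but imports the standard inputs on edgewise subdivision, the equivariant thick/thin realization comparison (where Convention~\ref{conv:models} must again be used, for the subdivided object), and a second skeletal induction. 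Granting those standard facts, your proof is sound.
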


\begin{proof}
Theorem~\ref{thm:isaksen} works just as written in the Borel 
equivariant context where we understand weak equivalences (and
$m$-equivalences) as maps that are weak equivalences (and
$m$-equivalences, resp.) of the underlying spectra.  In terms
of~\cite{FauskIsaksen}, we look at the model structure on the category
of orthogonal spectra with $C_{q}$-actions where the fibrations and
weak equivalences are the equivariant maps that are fibrations and
weak equivalences of the underlying spectra and use the Postnikov
section $t$-structure.  (This is the $\aF$-model structure
of~\cite[IV.6.5]{MM} on orthogonal spectra indexed on $\{\bR^{n}\}$ where
$\aF$ is the family containing only the trivial subgroup.)  By the
previous proposition and the Borel equivariant version of
Theorem~\ref{thm:isaksen}, for each $m$ we can find pro-isomorphisms
\[
X\iso N^{cy}(\bS_{(p)}), \qquad 
N^{cy}(\tau_{\geq 0}L_{n}\bS)\iso Y
\]
in the pro-category of orthogonal $C_{q}$-spectra
so that the composite $X\to Y$ is represented by a level map that is a
levelwise $m$-equivalence.  We then have pro-isomorphisms
\[
X_{hC_{q}}\iso N^{cy}(\bS_{(p)})_{hC_{q}}, \qquad 
N^{cy}(\tau_{\geq 0}L_{n}\bS)_{hC_{q}}\iso Y_{hC_{q}}
\]
and the composite map $X_{hC_{q}}\to Y_{hC_{q}}$ remains a level map
and a levelwise $m$-equivalence. 
\end{proof}

For the next step, we need a model for $THH(\bS_{(p)})$ and the tower
$THH(\tau_{\geq 0}L_{n}\bS)$ with the correct genuine
$\bT$-equivariant homotopy type to construct classic $TC(-;p)$ and
$TC^{k}(-;p)$.  It is convenient to assume the models to be fibrant in
the category of $p$-cyclotomic spectra of~\cite{BM-cycl}: this ensures
that the point-set $C_{p^{k}}$-fixed points have the correct homotopy type for
all $k$.   With this hypothesis, we get the following comparison
result for the $C_{p^{k}}$-fixed points applied to the map of
pro-$\bT$-spectra $\{THH(\bS_{(p)})\}\to \{THH(\tau_{\geq
0}L_{n}\bS)\}$.

\begin{prop}
For each $k$, the map of pro-spectra 
\[
\{THH(\bS_{(p)})^{C_{p^{k}}}\}\to \{THH(\tau_{\geq
0}L_{n}\bS)^{C_{p^{k}}}\}
\]
is a weak equivalence.
\end{prop}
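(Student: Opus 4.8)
The plan is to induct on $k$, peeling off one layer of fixed points at a time via the fundamental cofiber sequence of the cyclotomic spectrum $THH(R)$ and feeding the homotopy orbit term into Proposition~\ref{prop:horbits}.

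For $k=0$ there is nothing new: $THH(R)^{C_{p^{0}}}$ is the underlying spectrum of $THH(R)$, which by Patchkoria--Sagave~\cite{PatchkoriaSagave} is weakly equivalent, as a Borel $\bT$-spectrum and hence as a spectrum, to $N^{cy}(R)$, so the map in question is identified (via a map natural in $R$) with the weak equivalence of pro-spectra of Proposition~\ref{prop:Ncy}. For the inductive step, recall that for any classic cyclotomic spectrum $X$, with structure equivalence $\rho^{*}_{p}X^{\Phi C_{p}}\overto{\simeq}X$, isotropy separation for the $C_{p^{k}}$-action together with the cyclotomic structure produces a natural fiber sequence
\[
X_{hC_{p^{k}}}\to X^{C_{p^{k}}}\overto{R}X^{C_{p^{k-1}}}
\]
in which the fiber of the restriction map $R$ is the (Borel) homotopy orbit spectrum (cf.\ the construction of $TR$ in \cite{BHM}). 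Our chosen models for $THH(\bS_{(p)})$ and for the tower $THH(\tau_{\geq 0}L_{n}\bS)$ are fibrant in the category of $p$-cyclotomic spectra of \cite{BM-cycl}, so the point-set $C_{p^{j}}$-fixed points compute the derived ones and this fiber sequence is realized levelwise and naturally in $R$. Applying it to $R=\bS_{(p)}$ and $R=\tau_{\geq 0}L_{n}\bS$ therefore gives a map of levelwise long exact sequences of $\pi_{q}$ pro-abelian groups, comparing the homotopy orbits, the $C_{p^{k}}$-fixed points, and the $C_{p^{k-1}}$-fixed points of the two sides.

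All the pro-spectra in sight are levelwise connective: $\bS_{(p)}$ and $\tau_{\geq 0}L_{n}\bS$ are connective, hence so are their $THH$ and their homotopy orbit spectra, and then by the fiber sequence and the inductive hypothesis the genuine fixed points $THH(R)^{C_{p^{j}}}$ are connective for $j\le k$. So by Theorem~\ref{thm:isaksen} (applied with $N=0$), for each of the three maps ``weak equivalence of pro-spectra'' is equivalent to ``pro-isomorphism on $\pi_{q}$ for all $q$''. The orbit term is handled directly: since $THH(R)\simeq N^{cy}(R)$ as Borel $\bT$-spectra we have $THH(R)_{hC_{p^{k}}}\simeq N^{cy}(R)_{hC_{p^{k}}}$, so the left-hand map is a weak equivalence by Proposition~\ref{prop:horbits} with $q=p^{k}$; and the right-hand map is a pro-isomorphism on every $\pi_{q}$ by the inductive hypothesis. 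Since pro-abelian groups form an abelian category and a levelwise long exact sequence of pro-groups is exact, the five lemma applied to the map of long exact sequences shows the middle map $\{THH(\bS_{(p)})^{C_{p^{k}}}\}\to\{THH(\tau_{\geq 0}L_{n}\bS)^{C_{p^{k}}}\}$ is a pro-isomorphism on every $\pi_{q}$, hence a weak equivalence by Theorem~\ref{thm:isaksen}; this completes the induction.

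The step that requires care is the point-set bookkeeping in the inductive step: arranging fibrant cyclotomic models of $THH(\bS_{(p)})$ and of the tower $THH(\tau_{\geq 0}L_{n}\bS)$ for which the fundamental cofiber sequence is realized by an honest levelwise cofiber sequence of pro-spectra, compatibly with the structure maps of the tower. Once that is set up, everything else is a formal diagram chase in pro-abelian groups, legitimized by the uniform connectivity bound $N=0$ that lets Theorem~\ref{thm:isaksen} translate between weak equivalences of pro-spectra and pro-isomorphisms on homotopy pro-groups.
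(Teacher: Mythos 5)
Your argument is correct and is essentially the paper's proof: the base case $k=0$ via Proposition~\ref{prop:Ncy} and the Patchkoria--Sagave comparison, and the inductive step via the fundamental cofiber sequence of~\cite[Thm.~2.2]{HM2}, the resulting map of long exact sequences of homotopy pro-groups, Proposition~\ref{prop:horbits} for the orbit term, and the five lemma in pro-abelian groups, with Theorem~\ref{thm:isaksen} (using connectivity) translating between pro-isomorphisms on $\pi_{q}$ and weak equivalences. Your worry about realizing the cofiber sequence by a point-set levelwise cofiber sequence is unnecessary: naturality of the fundamental cofiber sequence in the stable category (together with the fibrancy of the models, which the paper assumes so that point-set fixed points have the right homotopy type) already yields the levelwise map of long exact sequences of pro-groups that the five-lemma argument needs.
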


\begin{proof}
For $k=0$, the $C_{1}$-fixed points are the underlying spectrum and
the statement follows from the Proposition~\ref{prop:Ncy} using the Borel
equivariant weak equivalence of $THH(R)$ and $N^{cy}(R)$ (for ring
spectra $R$ which are cofibrant or are cofibrant
as commutative ring spectra).

Now let $k\geq 1$.  For a $p$-cyclotomic spectrum $X$, Theorem~2.2
of~\cite{HM2} constructs a ``fundamental cofiber sequence'' in the
stable category
\[
X_{hC_{p^{k}}}\to X^{C_{p^{k}}}\to X^{C_{p^{k-1}}},
\]
natural in maps of $p$-cyclotomic spectra.  Applying $\pi_{q}$ we get
a long exact sequence, and for the map of pro-spectra 
\[
\{THH(\bS_{(p)})\}\to \{THH(\tau_{\geq 0}L_{n}\bS)\},
\]
we get a homomorphism of long exact sequences in the pro-category of
abelian groups.  The map
\[
\{\pi_{q}(THH(\bS_{(p)})_{hC_{p^{k}}}\}\to 
\{\pi_{q}(THH(\tau_{\geq 0}L_{n}\bS)_{hC_{p^{k}}}\}
\]
is a pro-isomorphism by Proposition~\ref{prop:horbits} and the map 
\[
\{\pi_{q}(THH(\bS_{(p)})^{C_{p^{k-1}}}\}\to 
\{\pi_{q}(THH(\tau_{\geq 0}L_{n}\bS)^{C_{p^{k-1}}}\}
\]
is a pro-isomorphism by induction, and we conclude that the map
\[
\{\pi_{q}(THH(\bS_{(p)})^{C_{p^{k}}}\}\to 
\{\pi_{q}(THH(\tau_{\geq 0}L_{n}\bS)^{C_{p^{k}}}\}
\]
is a pro-isomorphism by the five lemma.
\end{proof}

Because $TC^{k}(-;p)$ is defined as a homotopy equalizer of a pair of
maps between the fixed points, we immediately deduce the following
proposition. 

\begin{prop}\label{prop:tck}
For each $k$, the map of pro-spectra $\{TC^{k}(\bS_{(p)})\}\to
\{TC^{k}(\tau_{\geq 0}L_{n}\bS)\}$ is a weak equivalence.
\end{prop}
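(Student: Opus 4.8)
The plan is to deduce this from the previous proposition together with the fact that $TC^{k}(-;p)$ is built from the genuine fixed point functors by a finite homotopy limit. First I would realize $TC^{k}(X;p)$ as the homotopy fiber of the difference map $R-F\colon X^{C_{p^{k}}}\to X^{C_{p^{k-1}}}$ (the standard identification of the homotopy equalizer of a pair of maps of spectra with the homotopy fiber of their difference), choosing a point-set model that is natural in maps of classic $p$-cyclotomic spectra and compatible with the fibrant $THH$-models used above for $\bS_{(p)}$ and $\tau_{\geq 0}L_{n}\bS$. Applying $\pi_{*}$ to the resulting natural fiber sequence
\[
TC^{k}(X;p)\to X^{C_{p^{k}}}\overto{R-F}X^{C_{p^{k-1}}}
\]
and feeding in the level map of pro-$\bT$-spectra $\{THH(\bS_{(p)})\}\to\{THH(\tau_{\geq 0}L_{n}\bS)\}$ yields a homomorphism of long exact sequences of $\pi_{q}$ pro-groups, natural in the evident sense.

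By the previous proposition (whose proof establishes the corresponding statement about $\pi_{q}$ pro-groups), the maps $\{\pi_{q}THH(\bS_{(p)})^{C_{p^{k}}}\}\to\{\pi_{q}THH(\tau_{\geq 0}L_{n}\bS)^{C_{p^{k}}}\}$ and $\{\pi_{q}THH(\bS_{(p)})^{C_{p^{k-1}}}\}\to\{\pi_{q}THH(\tau_{\geq 0}L_{n}\bS)^{C_{p^{k-1}}}\}$ are pro-isomorphisms for all $q$, so the five lemma in the abelian category of pro-abelian groups forces $\{\pi_{q}TC^{k}(\bS_{(p)};p)\}\to\{\pi_{q}TC^{k}(\tau_{\geq 0}L_{n}\bS;p)\}$ to be a pro-isomorphism for all $q$. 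To promote this to a weak equivalence of pro-spectra in the sense of Definition~\ref{defn:weakeq}, I would apply Theorem~\ref{thm:isaksen}, which requires a uniform connectivity bound. Since $\bS_{(p)}$ and each $\tau_{\geq 0}L_{n}\bS$ are connective, so is $THH$ of each, and an induction on $k$ using the fundamental cofiber sequence $X_{hC_{p^{k}}}\to X^{C_{p^{k}}}\to X^{C_{p^{k-1}}}$ shows that each $THH(-)^{C_{p^{k}}}$ is connective; hence $TC^{k}(-;p)$, being a homotopy fiber of a map of connective spectra, is levelwise $(-1)$-connected on both sides. Theorem~\ref{thm:isaksen} then applies and gives the claim.

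The only step requiring genuine care is the first one: pinning down a point-set model of the homotopy equalizer (or of the difference construction) that is functorial in $p$-cyclotomic maps and sits over the chosen fibrant $THH$-models, so that the long exact sequence argument really takes place levelwise in the pro-category of orthogonal spectra. Given the conventions already in force this is routine, and the remainder is exactly the five-lemma-plus-Isaksen pattern used in the preceding propositions.
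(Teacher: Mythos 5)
Correct, and essentially the same approach the paper takes in its one-line deduction: $TC^{k}(-;p)$ is a homotopy equalizer (equivalently a homotopy fiber) of maps between the fixed-point spectra, and the previous proposition says those fixed-point maps are weak equivalences of pro-spectra. Your detour through the long exact sequence of $\pi_q$ pro-groups and Theorem~\ref{thm:isaksen} is a valid way to finish, but it is more work than needed, since forming homotopy fibers levelwise carries an essentially levelwise $m$-equivalence to an essentially levelwise $(m-1)$-equivalence, so the fiber functor preserves the weak equivalences of Definition~\ref{defn:weakeq} with no connectivity hypothesis or five lemma required; also note the small off-by-one that the fiber of a map of connective spectra can have nonzero $\pi_{-1}$ (the correct uniform bound is $\pi_q=0$ for $q\le -2$), which is harmless since Theorem~\ref{thm:isaksen} only needs some uniform bound.
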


Theorem~\ref{thm:mainconn} follows easily from the preceding
proposition.

\section{Barthel's chromatic convergence criterion and $TC(\bS_{(p)})$}
\label{sec:nonconn}\label{sec:cc}

In this section, we prove Theorems~\ref{main} and~\ref{main:cc} using Barthel's
chromatic convergence criterion.  This criterion abstracts the Hopkins-Ravenel
proof of the Chromatic Convergence Theorem and states it in general
terms for general $p$-local spectra as follows.

\begin{thm}[Barthel~{\cite[3.8]{Barthel-ChromaticConvergence}}]
Let $X$ be a $p$-local spectrum whose $BP$-homology $BP_{*}X$ has
finite projective dimension as a graded $BP_{*}$-module.  Then $X\to
\holim L_{n}X$ is a weak equivalence.
\end{thm}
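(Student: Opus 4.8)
The plan is to pass to the fiber of $X\to\holim_n L_nX$ and prove it is contractible. By the Smashing Theorem (Ravenel's smashing conjecture, proved by Hopkins and Ravenel~\cite{Ravenel-Nilpotence}) the localization $L_n$ is smashing, so $L_nY\htp Y\sma L_n\bS$ for all $Y$; writing $C_n:=\mathrm{fib}(\bS_{(p)}\to L_n\bS)$ we obtain $\mathrm{fib}(X\to L_nX)\htp X\sma C_n$ and hence
\[
\mathrm{fib}\bigl(X\to\holim_n L_nX\bigr)\htp\holim_n(X\sma C_n).
\]
So the theorem is equivalent to the statement $\holim_n(X\sma C_n)\htp *$, and the Hopkins--Ravenel Chromatic Convergence Theorem~\cite[7.5.7]{Ravenel-Nilpotence} is exactly the case $X=\bS_{(p)}$, namely $\holim_n C_n\htp *$. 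Two formal consequences will be used. First, $G(Y):=\holim_n(Y\sma C_n)$ is an exact functor of spectra (smashing with $C_n$ preserves cofiber sequences, and homotopy limits preserve fiber sequences). Second, $G$ vanishes on every \emph{finite} $p$-local spectrum, since for finite $Y$ the functor $Y\sma(-)$, being right adjoint to smashing with the Spanier--Whitehead dual, commutes with homotopy limits, whence $G(Y)\htp Y\sma\holim_n C_n\htp *$.

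The remaining task is to prove $G(X)\htp *$ for general $X$ with $\mathrm{pd}_{BP_*}BP_*X<\infty$, which I would do by induction on $d:=\mathrm{pd}_{BP_*}BP_*X$. For $d\geq 1$, choose a short exact sequence of graded $BP_*$-modules $0\to M'\to P\to BP_*X\to 0$ with $P$ free and $\mathrm{pd}_{BP_*}M'=d-1$, and realize it as a cofiber sequence of spectra $X'\to W\to X$ in which $W$ is a wedge of suspensions of $\bS$ realizing $P$ and $W\to X$ realizes the surjection $P\twoheadrightarrow BP_*X$ on $BP$-homology; the long exact sequence then gives $BP_*X'\iso M'$. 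Applying the exact functor $G$ and invoking the induction hypothesis for $X'$ (with $\mathrm{pd}\,BP_*X'=d-1$) together with the base case $d=0$ for $W$ (which has free $BP$-homology), one concludes $G(X)\htp *$. Everything therefore reduces to the base case: $G(X)\htp *$ whenever $BP_*X$ is free --- equivalently, whenever $X$ is $BP$-homologically a wedge of spheres, including infinite wedges and spectra like $BP$ itself. For this I would compare the topological chromatic tower $\{X\sma C_n\}_n$ with the algebraic chromatic tower of the flat $BP_*$-module $BP_*X$: here $BP_*(X\sma C_n)$ is computed by the algebraic $E(n)$-acyclization of $BP_*X$, Ravenel's \emph{algebraic} chromatic convergence theorem~\cite{Ravenel-Nilpotence} (applicable because $BP_*X$ has finite projective dimension over $BP_*$) shows that the homotopy limit of these algebraic towers vanishes, and the $BP$-based Adams spectral sequence is meant to transport this vanishing to the topological conclusion $\holim_n(X\sma C_n)\htp *$.

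The step I expect to be the main obstacle is precisely this last transport in the base case. The difficulty has two faces. One must control the homotopy limit over $n$ of the associated family of Adams spectral sequences, together with the $\lim$ and $\lim^1$ terms, \emph{uniformly in $n$} and with no bounded-below or finite-type hypothesis at hand; the naive interchange of $\holim_n$ with the filtered colimits built into an infinite cell structure is false, and it is the finiteness of the projective dimension --- which bounds the length of the finite filtration of $X$, hence the horizontal extent of the spectral sequences involved --- that must be leveraged to force convergence. Relatedly, the topological realization of a finite $BP_*$-free resolution is genuinely delicate when the free modules are infinitely generated on classes in arbitrarily negative degrees, so that the intermediate spectra $X'$ are neither bounded below nor of finite type; this is exactly the regime in which chromatic convergence is not formal and in which the hypothesis on $BP_*X$ is doing the essential work.
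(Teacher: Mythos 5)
First, a point of comparison: the paper does not prove this statement at all --- it is imported verbatim from Barthel~\cite[3.8]{Barthel-ChromaticConvergence} (which abstracts the Hopkins--Ravenel argument), so there is no internal proof to measure you against; your sketch has to stand on its own, and as written it does not. Your formal reductions are fine (smashingness gives $\Fib(X\to\holim_n L_nX)\htp\holim_n(X\sma C_n)$, and the finite case follows since $Y\sma(-)$ commutes with homotopy limits for finite $Y$), but the two substantive steps both have problems. The inductive step is actually false as stated: a map $W\to X$ from a wedge of suspensions of $\bS$ has image in $BP_*X$ contained in the $BP_*$-submodule generated by the Hurewicz image of $\pi_*X$, so one cannot in general realize a surjection $P\twoheadrightarrow BP_*X$ from a free module by such a map. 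A concrete example inside the scope of your induction is $X=BP\sma \bS/p$ (the cofiber of $p$ on $BP$): here $BP_*X\iso BP_*BP/p$ has projective dimension one, but in degree $2(p-1)$ the submodule generated by the Hurewicz image of $\pi_*X\iso BP_*/p$ is spanned by $v_1$ (since $\eta_R(v_1)\equiv v_1$ mod $p$) and does not contain $t_1$, so no map from a wedge of spheres is surjective on $BP$-homology. Thus the cofiber sequence $X'\to W\to X$ you want does not exist, and the induction on projective dimension cannot even start in this form.

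Second, the base case --- which you yourself flag as the main obstacle --- is left open, and it is not a routine transport of an algebraic statement along the $BP$-based Adams spectral sequence: chromatic convergence genuinely fails without the hypothesis (for instance $L_nH\bF_p\htp *$ for all $n$, consistent with $BP_*H\bF_p$ having infinite projective dimension), and even the identification of $BP_*(X\sma C_n)$ with an algebraic acyclization is delicate (in general it only holds up to derived functors of the algebraic localization). In Barthel's actual argument, following Hopkins--Ravenel, the finite projective dimension enters quantitatively and without any topological realization of a resolution: one shows that the tower $\{BP_*(X\sma C_n)\}$ is pro-trivial (a composite of roughly $\mathrm{pd}+1$ consecutive maps is zero on $BP$-homology, hence raises Adams--Novikov filtration), and combines this with horizontal vanishing lines of uniformly controlled height in the relevant localized Adams--Novikov spectral sequences to conclude that each tower $\{\pi_q(X\sma C_n)\}_n$ is pro-trivial; this kills $\lim$ and $\lim^1$ and gives $\holim_n(X\sma C_n)\htp *$. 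So the missing content of your sketch is precisely the content of the theorem, and the route you propose for supplying it (realize a free resolution by wedges of spheres, then quote an algebraic convergence statement) does not work.
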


We need only the following special cases.

\begin{cor}\label{cor:Bp}
Let $G=\bT$ or $G=C_{p^{k}}$ for some $k\geq 0$; then the map
$(\Sigma^{\infty}_{+}BG)_{(p)}\to  \holim_{n}L_{n}(\Sigma^{\infty}_{+}BG)$
is a weak equivalence.
\end{cor}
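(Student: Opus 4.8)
The plan is to verify the hypothesis of Barthel's criterion for $X = (\Sigma^{\infty}_{+}BG)_{(p)}$ when $G = \bT$ or $G = C_{p^{k}}$: namely, that $BP_{*}(\Sigma^{\infty}_{+}BG)$ has finite projective dimension as a graded $BP_{*}$-module. Once this is established, Barthel's theorem immediately gives that $(\Sigma^{\infty}_{+}BG)_{(p)} \to \holim_{n} L_{n}(\Sigma^{\infty}_{+}BG)$ is a weak equivalence, which is the assertion of the corollary.

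For the case $G = \bT$, we have $B\bT = \bC P^{\infty}$, and $BP_{*}(\Sigma^{\infty}_{+}\bC P^{\infty}) = BP_{*}(\bC P^{\infty})_{+} = BP_{*}[[x]]$-completed appropriately; more precisely $BP_{*}(\bC P^{\infty})$ is free over $BP_{*}$ on the standard basis $\{\beta_{0}, \beta_{1}, \dots\}$ dual to powers of the first Chern class, hence a free $BP_{*}$-module. A free module has projective dimension $0$, so the hypothesis holds trivially. For the case $G = C_{p^{k}}$, the classifying space $BC_{p^{k}}$ is an infinite lens space, and the $BP$-homology $BP_{*}(BC_{p^{k}})$ is classically known (Landweber, Johnson-Wilson): it is $BP_{*}$ in degree $0$ plus a torsion $BP_{*}$-module built from $BP_{*}(\bC P^{\infty})/([p^{k}](x))$, i.e. a cyclic module presented by the $[p^{k}]$-series. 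The key point is that $[p^{k}](x)$ is a non-zero-divisor in $BP_{*}[[x]]$ (equivalently in the relevant polynomial/power series ring), so $BP_{*}(BC_{p^{k}})$ is presented by a short free resolution of length one — it has projective dimension at most $1$. This is the finite projective dimension required.

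The main obstacle is verifying the finite projective dimension claim for $BC_{p^{k}}$ carefully: one must identify $BP_{*}(\Sigma^{\infty}_{+}BC_{p^{k}})$ precisely, exhibit it as an extension of $BP_{*}$ (the contribution of the basepoint/degree-zero part) by a module of the form $BP_{*}\{\beta_{i}\}_{i \geq 0}/(\text{image of }[p^{k}](x))$, and then check that multiplication by the $[p^{k}]$-series (or rather the matrix presenting the action in the $\beta$-basis) is injective. Injectivity follows because $BP_{*}$ is an integral domain and the leading term of $[p^{k}](x)$ with respect to the natural grading/filtration is $p^{k} x$ (up to unit), which is a non-zero-divisor; so the relevant map of free modules is a monomorphism and we get a length-$1$ free resolution. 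Since extensions of modules of projective dimension $\leq 1$ by projectives still have projective dimension $\leq 1$, the hypothesis is satisfied.

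Alternatively, and perhaps more cleanly, one can observe that $\Sigma^{\infty}_{+}BG$ for $G$ a compact Lie group with $BP_{*}(BG)$ concentrated in even degrees and finitely presented in each degree, plus the fact that $BP_{*}$ is a (countably-generated) polynomial ring on even generators — hence a coherent regular ring in the graded sense of bounded global dimension when restricted to the relevant subrings — lets one conclude finite projective dimension formally. But for the two specific groups at hand the direct computation above is elementary and self-contained, so I would carry it out that way, citing Johnson-Wilson or Ravenel's green book for the formula for $BP_{*}(BC_{p^{k}})$ and the $[p^{k}]$-series being a non-zero-divisor, and then invoke Barthel's criterion to conclude.
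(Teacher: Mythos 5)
Your proposal is correct and takes essentially the same route as the paper: both verify Barthel's hypothesis by showing $BP_{*}(\Sigma^{\infty}_{+}B\bT)$ is free (projective dimension $0$) and $BP_{*}(\Sigma^{\infty}_{+}BC_{p^{k}})$ has projective dimension $\leq 1$ via the length-one free resolution coming from the $[p^{k}]$-series/Euler class. The only difference is that where you sketch that resolution argument directly (and it does go through, since the ``matrix'' of the map of free modules is triangular with leading entries $p^{k}$, a non-zero-divisor in $BP_{*}$), the paper simply cites Johnson--Wilson (2.11), generalized from $C_{p}$ to $C_{p^{k}}$, and handles $k=0$ by quoting Hopkins--Ravenel.
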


\begin{proof}
The Atiyah-Hirzebruch spectral sequence identifies $BP_{*}(B\bT)$ as
a free graded $BP_{*}$-module and it therefore has projective dimension~0.
For $G=C_{p^{k}}$, the case $k=0$ is the Hopkins-Ravenel Chromatic
Convergence Theorem. For $k>0$, the
Johnson-Wilson~\cite[(2.11)]{JohnsonWilson-BPpgroups} argument generalizes from
$C_{p}$ to $C_{p^{k}}$ to show that $BP_{*}(BC_{p^{k}})$ is projective
dimension~1 as a graded $BP_{*}$-module.
\end{proof}

The previous corollary immediately proves Theorem~\ref{main:cc}.

\begin{proof}[Proof of Theorem~\ref{main:cc}]
The main result of~\cite{BHM} (in the case of the trivial group)
identifies $TC(\bS\phat;p)\phat\simeq TC(\bS;p)\phat$ as 
\[
\bS\phat \vee \Fib(\text{tr}\colon \Sigma \Sigma^{\infty}_{+}B\bT\to \bS)\phat
\]
where $\text{tr}$ denotes the transfer.  Commuting wedges, suspension,
and fiber sequences with homotopy limits, the result follows from
Corollary~\ref{cor:Bp}. 
\end{proof}

The remainder of the section proves the following theorem.

\begin{thm}\label{thm:main}
The map $TC(\bS_{(p)};p)\to \holim TC(L_{n}\bS;p)$
is a weak equivalence.
\end{thm}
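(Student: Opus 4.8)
The plan is to reduce Theorem~\ref{thm:main} to the chromatic convergence statements collected in Corollary~\ref{cor:Bp}, following the same outline as in Section~\ref{sec:conn} but exploiting the key simplification that $THH(L_n\bS)\simeq L_n\bS$ as a Borel $\bT$-spectrum, which lets us avoid pro-object arguments entirely. The starting observation is that for $n\geq 0$ the localization $L_n\bS$ is smashing, so $THH(L_n\bS) = N^{cy}(L_n\bS) \simeq L_n\bS \sma_{\bS} L_n\bS \sma_{\bS}\cdots$; by the smashing property each factor is absorbed and $THH(L_n\bS)\simeq L_n\bS$ with trivial residual $\bT$-action (more precisely, the cyclotomic structure is the one coming from $L_n$ applied to the trivial cyclotomic structure on $\bS$). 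Consequently, as $n$ varies, $\{THH(L_n\bS)\}$ is the tower $\{L_n\bS\}$ with its diagonal $\bT$-action, and the holim over $n$ computes $\bS_{(p)}$ by Hopkins-Ravenel.

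First I would set up the $TC$ computation using the Nikolaus-Scholze formula, since $L_n\bS$ is $p$-local but not connective: $TC(L_n\bS;p) = \hoeq(\can,\phi^{hC_{p^\infty}}\colon (L_n\bS)^{hC_{p^\infty}}\rightrightarrows ((L_n\bS)^{tC_p})^{hC_{p^\infty}})$. The Tate term is where chromatic convergence enters a second time: for the trivial action on $L_n\bS$ one has $(L_n\bS)^{tC_p} \simeq L_n\bS \sma (\bS^{tC_p})$ is not simply $L_n$ of $\bS^{tC_p}$, because the Tate construction does not commute with $L_n$ on the nose — but after applying $(-)^{hC_{p^\infty}}$ and tracking the relevant finite skeleta of $BC_{p^k}$, the relevant spectra are built (via the homotopy fixed point and Tate spectral sequences, i.e.\ the norm cofiber sequence) out of function spectra $F((BC_{p^k})_+, L_n\bS)$ and their limits. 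So the second step is to identify $\holim_n TC(L_n\bS;p)$ as the total fiber of a small diagram whose entries are $\holim_n F((\Sigma^\infty_+ BC_{p^k}), L_n\bS) \simeq F(\Sigma^\infty_+ BC_{p^k}, \holim_n L_n\bS)$. Here I would use that $\Sigma^\infty_+ BC_{p^k}$ is a spectrum with finitely generated homotopy in each degree (indeed it is $p$-locally built from finitely many cells through each dimension), so that function spectra out of it commute with the homotopy limit over $n$; equivalently, one applies Corollary~\ref{cor:Bp} to $\Sigma^\infty_+ BC_{p^k}$ to get $F(\Sigma^\infty_+BC_{p^k}, \bS_{(p)}) \simeq \holim_n F(\Sigma^\infty_+ BC_{p^k}, L_n\bS)$. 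This is precisely the point where Barthel's criterion for $\Sigma^\infty_+ BC_{p^k}$ (rather than for $\bS$) is used, as the introduction promises.

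The third step is to assemble these identifications through the $hC_{p^\infty}$-fixed points and the homotopy equalizer. Since $(-)^{hC_{p^\infty}} = \holim_k(-)^{hC_{p^k}}$ and each $(-)^{hC_{p^k}}$ and each Tate construction is a finite homotopy limit built from the relevant function spectra, and since holim over $n$ commutes with holim over $k$ and with finite homotopy limits, I get
\[
\holim_n TC(L_n\bS;p) \simeq \hoeq\bigl(\can,\phi^{hC_{p^\infty}}\colon (\bS_{(p)})^{hC_{p^\infty}}\rightrightarrows ((\bS_{(p)})^{tC_p})^{hC_{p^\infty}}\bigr) = TC(\bS_{(p)};p),
\]
where in the middle term $\bS_{(p)}$ carries the trivial ($=$ the relevant cyclotomic) structure, and the identification of the cyclotomic structure maps under the comparison follows from naturality of $\can$ and $\phi$ in the Borel $C_{p^\infty}$-spectrum $\holim_n THH(L_n\bS)\simeq \bS_{(p)}$. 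The map in the theorem is then checked to be the resulting equivalence by naturality.

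**Main obstacle.** The hard part will be the bookkeeping in the second and third steps: making precise the sense in which $TC(L_n\bS;p)$ is "built by finite homotopy limits out of function spectra $F(\Sigma^\infty_+ BC_{p^k}, L_n\bS)$" uniformly in $n$, so that one may legitimately commute $\holim_n$ past the homotopy-fixed-point and Tate constructions. The subtlety is that $(-)^{hC_{p^\infty}}$ is an \emph{infinite} homotopy limit (over $k$) and the individual Tate terms $(L_n\bS)^{tC_p}$ are themselves infinite limits; commuting $\holim_n$ into such towers requires either a $\lim^1$/Milnor-sequence analysis or, more cleanly, the observation that $F(Z,-)$ commutes with \emph{all} homotopy limits for any spectrum $Z$, reducing everything to the single chromatic convergence input $F(\Sigma^\infty_+BC_{p^k}, \bS_{(p)})\simeq\holim_n F(\Sigma^\infty_+BC_{p^k},L_n\bS)$ of Corollary~\ref{cor:Bp}. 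I expect the cleanest route is to express both $(-)^{hC_{p^k}}$ and $(-)^{tC_p}$ entirely in terms of such function spectra (via the norm cofiber sequence $\Sigma BG_+ \to \cdots$ presenting the Tate construction, and $(-)^{hC_{p^k}} = F(\Sigma^\infty_+ BC_{p^k}, -)$ in the Borel setting), at which point the commutation is formal and the only non-formal ingredient is Corollary~\ref{cor:Bp}.
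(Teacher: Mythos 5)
Your starting point (the smashing theorem giving $THH(L_{n}\bS)\simeq L_{n}\bS$ with trivial action) matches the paper, and you correctly guess that Barthel's criterion for $\Sigma^{\infty}_{+}BC_{p^{k}}$ must be the extra input. But the mechanism you propose for using it does not work, and the statement you isolate as ``the only non-formal ingredient'' is both too weak and misattributed. The comparison $F(\Sigma^{\infty}_{+}BC_{p^{k}},\bS_{(p)})\simeq \holim_{n}F(\Sigma^{\infty}_{+}BC_{p^{k}},L_{n}\bS)$ is a purely formal consequence of Hopkins--Ravenel chromatic convergence of the sphere, since $F(Z,-)$ preserves all homotopy limits for any $Z$; no property of $BC_{p^{k}}$ (and no appeal to Barthel) is involved. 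If $TC(L_{n}\bS;p)$ really were built, uniformly in $n$, out of such function spectra by homotopy limits, the theorem would follow from chromatic convergence of $\bS$ alone --- and that is exactly what fails. The Tate term cannot be expressed this way: in the norm cofiber sequence $(L_{n}\bS)_{hC_{p}}\to (L_{n}\bS)^{hC_{p}}\to (L_{n}\bS)^{tC_{p}}$ the homotopy-orbit term is $L_{n}\bS\sma \Sigma^{\infty}_{+}BC_{p}$, a smash and not a cotensor, and since $\Sigma^{\infty}_{+}BC_{p}$ is not finite (not dualizable), neither $(-)\sma\Sigma^{\infty}_{+}BC_{p}$ nor $(-)^{tC_{p}}$ commutes with $\holim_{n}$ formally. (Your side claim $(L_{n}\bS)^{tC_{p}}\simeq L_{n}\bS\sma\bS^{tC_{p}}$ is unjustified for the same reason.) So step two of your plan, and hence the commutation in step three, has a genuine gap at precisely the term where the new chromatic input is needed.

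The paper closes this gap by using that $L_{n}$ is smashing a second time: for the trivial action, $THH(L_{n}\bS)_{hC_{p^{k}}}\simeq L_{n}\bS\sma\Sigma^{\infty}_{+}BC_{p^{k}}\simeq L_{n}(\Sigma^{\infty}_{+}BC_{p^{k}})$, and Barthel's criterion (via the Johnson--Wilson computation that $BP_{*}BC_{p^{k}}$ has projective dimension one) then identifies $\holim_{n}THH(L_{n}\bS)_{hC_{p^{k}}}$ with $(\Sigma^{\infty}_{+}BC_{p^{k}})_{(p)}\simeq THH(\bS_{(p)})_{hC_{p^{k}}}$. This is where Barthel enters --- covariantly, through the homotopy orbits, not through function spectra. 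The Tate term is then handled by the norm cofiber sequence (the $k=1$ case of the orbit comparison plus the formal homotopy-fixed-point comparison), after which the equalizer formula gives the Nikolaus--Scholze case. Note also that the theorem is asserted for classic $TC$ as well, which for the non-connective $L_{n}\bS$ is not equivalent to Nikolaus--Scholze $TC$; the paper treats it by the fundamental cofiber sequence $X_{hC_{p^{k}}}\to X^{C_{p^{k}}}\to X^{C_{p^{k-1}}}$ and induction on $k$, a case your plan does not address.
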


The theorem is meant to apply to both classic $TC$ and
Nikolaus-Scholze $TC$, which for the non-connective spectra $L_{n}\bS$
are not equivalent.  In both cases, by Theorem~\ref{thm:TCS} the
result for $\bS_{(p)}$ implies the result for $\bS$ and $\bS\phat$
after $p$-completion and in particular implies Theorem~\ref{main}.

For both classic $TC$ and Nikolaus-Scholze $TC$, the argument starts
with the observation that because $L_{n}$-localization is smashing,
$THH(L_{n}\bS)$ is $L_{n}$-local.  It
follows that the unit map $\bS\to THH(L_{n}\bS)$ factors through a
weak equivalence $L_{n}\bS\to THH(L_{n}\bS)$.  Since the unit map
is $\bT$-equivariant, we get a weak equivalence $L_{n}\bS\to
THH(L_{n}\bS)$ of Borel $\bT$-spectra for the trivial $\bT$-action on
$L_{n}\bS$.  Taking homotopy orbits, we obtain the following
proposition. 

\begin{prop}
The map $\bS\to THH(L_{n}\bS)$ induces a weak equivalence
\[
L_{n}(\Sigma^{\infty}_{+}BC_{p^{k}})\to THH(L_{n}\bS)_{hC_{p^{k}}}.
\]
\end{prop}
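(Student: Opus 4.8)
The plan is to leverage the weak equivalence $L_{n}\bS\to THH(L_{n}\bS)$ of Borel $\bT$-spectra established in the preceding paragraph, where $L_{n}\bS$ carries the trivial $\bT$-action. Taking homotopy orbits with respect to $C_{p^{k}} < \bT$ commutes with weak equivalences of Borel $\bT$-spectra, so the map $(L_{n}\bS)_{hC_{p^{k}}}\to THH(L_{n}\bS)_{hC_{p^{k}}}$ is a weak equivalence. It therefore remains only to identify $(L_{n}\bS)_{hC_{p^{k}}}$ with $L_{n}(\Sigma^{\infty}_{+}BC_{p^{k}})$ compatibly with the map induced by the unit $\bS\to THH(L_{n}\bS)$.

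For the identification, first I would recall that for any spectrum $Y$ with trivial $C_{p^{k}}$-action, there is a natural equivalence $Y_{hC_{p^{k}}}\simeq Y\sma (BC_{p^{k}})_{+}$, since $Y_{hC_{p^{k}}} = (Y\sma (EC_{p^{k}})_{+})/C_{p^{k}}$ and the $C_{p^{k}}$-action on the smash factor is only on $(EC_{p^{k}})_{+}$. In particular $(\bS)_{hC_{p^{k}}}\simeq \Sigma^{\infty}_{+}BC_{p^{k}}$, recovering the unit map as the inclusion of the basepoint component after taking homotopy orbits. Applying this with $Y = L_{n}\bS$ gives $(L_{n}\bS)_{hC_{p^{k}}}\simeq (L_{n}\bS)\sma (BC_{p^{k}})_{+}$. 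Since $L_{n}$-localization is smashing, $(L_{n}\bS)\sma (BC_{p^{k}})_{+}\simeq L_{n}(\Sigma^{\infty}_{+}BC_{p^{k}})$, and under these identifications the map induced by $\bS\to THH(L_{n}\bS)$ on homotopy orbits is identified with the localization map $\Sigma^{\infty}_{+}BC_{p^{k}}\to L_{n}(\Sigma^{\infty}_{+}BC_{p^{k}})$ composed with the equivalence $(L_{n}\bS)_{hC_{p^{k}}}\overto{\simeq} THH(L_{n}\bS)_{hC_{p^{k}}}$; composing these two maps gives the asserted weak equivalence.

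There is little genuine obstacle here, as each step is a formal consequence of facts already in hand: the equivariant equivalence $L_{n}\bS\simeq THH(L_{n}\bS)$, the interaction of homotopy orbits with trivial actions, and the smashing property of $L_{n}$. The only point requiring mild care is checking naturality, i.e.\ that the composite equivalence genuinely is the map induced by the unit $\bS\to THH(L_{n}\bS)$ on $C_{p^{k}}$-homotopy orbits rather than merely an abstract equivalence between the two spectra; this follows because all three identifications (trivial-action homotopy orbits, smashing localization, the unit factoring as $\bS\to L_{n}\bS\to THH(L_{n}\bS)$) are natural in the spectrum and respect the $\bT$-equivariant structure, so the diagram of the unit map and the localization map commutes up to coherent homotopy.
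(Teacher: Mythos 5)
Your argument is correct and is exactly the one the paper intends: the paper derives the proposition by taking $C_{p^{k}}$-homotopy orbits of the Borel $\bT$-equivariant equivalence $L_{n}\bS\to THH(L_{n}\bS)$, using the trivial-action identification $(L_{n}\bS)_{hC_{p^{k}}}\simeq L_{n}\bS\sma (BC_{p^{k}})_{+}$ and the smashing property of $L_{n}$, just as you do. Your added remarks on naturality only make explicit what the paper leaves implicit.
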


This proposition combined with Corollary~\ref{cor:Bp} then implies the
following proposition.

\begin{prop}\label{prop:nchorbits}
The map $THH(\bS_{(p)})_{hC_{p^{k}}}\to \holim_{n}
THH(L_{n}\bS)_{hC_{p^{k}}}$ is a weak equivalence.
\end{prop}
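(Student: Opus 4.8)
The plan is to combine the previous proposition with the $k>0$ case of Corollary~\ref{cor:Bp}, but there is a subtlety: the homotopy orbit construction $(-)_{hC_{p^{k}}}$ does not commute with the homotopy limit $\holim_{n}$ in general, so I cannot simply write $\holim_{n} THH(L_{n}\bS)_{hC_{p^{k}}} \simeq (\holim_{n} THH(L_{n}\bS))_{hC_{p^{k}}}$ and be done. First I would use the previous proposition to rewrite the target as $\holim_{n} L_{n}(\Sigma^{\infty}_{+}BC_{p^{k}})$, and use the weak equivalence $THH(\bS_{(p)}) \simeq N^{cy}(\bS_{(p)})$ together with the identification $N^{cy}(\bS_{(p)})_{hC_{p^{k}}} \simeq (\Sigma^{\infty}_{+}BC_{p^{k}})_{(p)}$ (since $N^{cy}(\bS)$ is the free loop space spectrum $\Sigma^{\infty}_{+}L\bT \simeq \Sigma^{\infty}_{+}\bT$ with its rotation action, whose $C_{p^{k}}$-homotopy orbits give $\Sigma^{\infty}_{+}BC_{p^{k}}$, or more simply since $THH(\bS) \simeq \bS$ as a cyclotomic spectrum with trivial $\bT$-action) to rewrite the source as $(\Sigma^{\infty}_{+}BC_{p^{k}})_{(p)}$. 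Thus the statement reduces precisely to the assertion that
\[
(\Sigma^{\infty}_{+}BC_{p^{k}})_{(p)} \to \holim_{n} L_{n}(\Sigma^{\infty}_{+}BC_{p^{k}})
\]
is a weak equivalence, which is exactly Corollary~\ref{cor:Bp}.

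The one point requiring care is checking that the natural map in the statement agrees, under these identifications, with the chromatic comparison map of Corollary~\ref{cor:Bp}. For the source, the map $THH(\bS_{(p)}) \to THH(L_{n}\bS)$ is induced by the unit $\bS_{(p)} \to L_{n}\bS$, which after the trivial-action identification $THH(L_{n}\bS) \simeq L_{n}\bS$ (from the paragraph preceding the previous proposition) is just the localization map; passing to homotopy orbits gives the localization map $(\Sigma^{\infty}_{+}BC_{p^{k}})_{(p)} \to L_{n}(\Sigma^{\infty}_{+}BC_{p^{k}})$ up to the canonical identifications, since $(-)_{hC_{p^{k}}}$ applied to $L_{n}\bS$ with trivial action yields $L_{n}\bS \sma \Sigma^{\infty}_{+}BC_{p^{k}} \simeq L_{n}(\Sigma^{\infty}_{+}BC_{p^{k}})$ by smashing-ness. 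These are all compatible in $n$, so assembling over $n$ identifies the map of the proposition with the canonical map $(\Sigma^{\infty}_{+}BC_{p^{k}})_{(p)} \to \holim_{n} L_{n}(\Sigma^{\infty}_{+}BC_{p^{k}})$.

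The main obstacle is therefore not any deep input — Corollary~\ref{cor:Bp} does all the chromatic work — but rather the bookkeeping needed to see that the homotopy orbits of the trivial-action equivalence $L_{n}\bS \simeq THH(L_{n}\bS)$ are compatible, as $n$ varies, with the smashing identification $L_{n}\bS \sma \Sigma^{\infty}_{+}BC_{p^{k}} \simeq L_{n}(\Sigma^{\infty}_{+}BC_{p^{k}})$, so that taking $\holim_{n}$ of the previous proposition produces precisely the canonical chromatic comparison map and not merely an abstractly equivalent one. Once that naturality is in hand, the proposition is immediate.
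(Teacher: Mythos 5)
Your proposal is correct and follows essentially the same route as the paper: the paper also combines the preceding proposition (the trivial-action/smashing identification $L_{n}(\Sigma^{\infty}_{+}BC_{p^{k}})\simeq THH(L_{n}\bS)_{hC_{p^{k}}}$) with Corollary~\ref{cor:Bp}, together with the implicit identification $THH(\bS_{(p)})_{hC_{p^{k}}}\simeq (\Sigma^{\infty}_{+}BC_{p^{k}})_{(p)}$. The naturality bookkeeping you flag is a reasonable point of care, but the paper treats it as immediate since all the identifications are induced by the unit maps and are compatible in $n$.
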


Because homotopy fixed points commute with homotopy limits, the map
\[
THH(\bS_{(p)})^{hC_{p^{\infty}}}\to \holim_{n} THH(L_{n}\bS)^{hC_{p^{\infty}}}
\]
is also a weak equivalence.  Applying this observation and the
previous proposition in the case $k=1$, we see that the map
\[
THH(\bS_{(p)})^{tC_{p}}\to \holim_{n} THH(L_{n}\bS)^{tC_{p}}
\]
is a weak equivalence.  Commuting homotopy fixed points with homotopy
limits again, we see that the map
\[
(THH(\bS_{(p)})^{tC_{p}})^{h(C_{p^{\infty}}/C_{p})}\to \holim_{n} (THH(L_{n}\bS)^{tC_{p}})^{h(C_{p^{\infty}}/C_{p})}
\]
is a weak equivalence.  We can now prove Theorem~\ref{thm:main}.

\begin{proof}[Proof of Theorem~\ref{thm:main} for Nikolaus-Scholze
$TC$]
Using the notation of Section~\ref{sec:review}, we have shown above
that the maps 
\begin{gather*}
THH(\bS_{(p)})^{hC_{p^{\infty}}}\to 
\holim_{n} THH(L_{n}\bS)^{hC_{p^{\infty}}}\\
(\rho^{*}_{p}(THH(\bS_{(p)})^{tC_{p}}))^{hC_{p^{\infty}}}\to 
\holim_{n} (\rho^{*}_{p}(THH(L_{n}\bS)^{tC_{p}}))^{hC_{p^{\infty}}}.
\end{gather*}
are weak equivalences.  Since Nikolaus-Scholze $TC(-;p)$ is the fiber
of a map between these spectra, we get a weak equivalence 
\[
TC(\bS_{(p)};p)\to 
\holim_{n} TC(L_{n}\bS;p).\qedhere
\]
\end{proof}

\begin{proof}[Proof of Theorem~\ref{thm:main} for classic $TC$]
Recall that the ``fundamental cofiber sequence'' (Theorem~2.2
of~\cite{HM2})
\[
X_{hC_{p^{k}}}\to X^{C_{p^{k}}}\to X^{C_{p^{k-1}}}
\]
inductively relates the fixed points and homotopy orbits of a
$p$-cyclotomic spectrum.  By Proposition~\ref{prop:nchorbits} and
induction, we see that the map  
\[
THH(\bS_{p})^{C_{p^{k}}}\to \holim_{n} THH(L_{n}\bS)^{C_{p^{k}}}
\]
is a weak equivalence for all $k$ and we conclude that the map
\[
\holim_{k}
THH(\bS_{p})^{C_{p^{k}}}\to \holim_{k,n} THH(L_{n}\bS)^{C_{p^{k}}}
\]
is a weak equivalence.  Since $TC(-;p)$ is the homotopy fiber of a
self-map of $\holim_{k}(-)^{C_{k}}$, we get a weak equivalence 
\[
TC(\bS_{(p)};p)\to 
\holim_{n} TC(L_{n}\bS;p).\qedhere
\]
\end{proof}


\bibliographystyle{plain}
\bibliography{bluman}

\end{document}